\documentclass[11pt]{amsart}
  \usepackage[ngerman,english]{babel}
  \usepackage[latin1]{inputenc}
  \usepackage{amsmath}
  \usepackage{amsthm}
  \usepackage{amssymb}
  \usepackage{accents}
  \usepackage{enumerate,enumitem}
  \usepackage{graphicx}
  \usepackage{wrapfig}
  \usepackage{tikz-cd}
  \tikzcdset{cells={font=\everymath\expandafter{\the\everymath\displaystyle}},}
  \usepackage{tikz-3dplot}
  \usetikzlibrary{decorations.pathreplacing,backgrounds}
  \usetikzlibrary{automata, patterns, calc}
\RequirePackage[colorlinks,citecolor=blue,urlcolor=blue]{hyperref}
  \usepackage[arrow, matrix, curve]{xy}
  \usepackage{afterpage}
  \usepackage{subfig}
  \usepackage{setspace}
  \pagestyle{headings}

  \newtheorem{thm}{Theorem}[section]
  \newtheorem{lemma}[thm]{Lemma}
  \newtheorem{prop}[thm]{Proposition}
  \newtheorem{rem}[thm]{Remark}
  \newtheorem{cor}[thm]{Corollary}
  
  \newtheorem{ex}[thm]{Example}
  \newtheorem{mydef}[thm]{Definition}
  \newtheorem*{mydef*}{Definition}
  
  \newcommand{\cone}[1]{\text{C}(#1)}
  \DeclareMathOperator{\id}{id}

  \newcommand{\pp}{\bar{p}}
  \newcommand{\qq}{\bar{q}}



%


  \newcommand{\IS}{\mathbf{IS}_{\pp}^\bullet}
  \newcommand{\ISq}{\mathbf{IS}_{\qq}^\bullet}
  \newcommand{\VD}{\mathcal{D}}

  \newcommand{\Q}{\mathbb{Q}}

  \newcommand{\C}{\mathbb{C}}

%
  \newcommand{\Dbc}{\mathcal{D}^b_{cc}} 
  \newcommand{\rZ}{\ring{Z}}

  \newcommand{\OI}{\Omega I_{\pp}}

  \newcommand{\KK}{\mathbf{K}^\bullet}
  \newcommand{\LL}{\mathbf{L}^\bullet}
  \newcommand{\GG}{\mathbf{G}^\bullet}
  \newcommand{\II}{\mathbf{I}^\bullet}
  \newcommand{\BB}{\mathbf{B}^\bullet}

  \newcommand{\TT}{\mathbb{T}}
  \begin{document}

  \title{Self-dual intersection space complexes}

  \author{M. Agust\'in}
  \address{Marta Agust\'in: BCAM,  Basque Center for Applied Mathematics, Mazarredo 14, 48009 Bilbao, Basque Country, Spain}
  \email{martaav22@gmail.com}

  \author{J.T. Essig}
  \address{J. Timo Essig: Karlsruhe Institute of Technology, Engler-Bunte-Ring 1b, 76131 Karlsruhe, Germany,\href{https://orcid.org/0000-0002-9975-8721}{\includegraphics[scale=0.5]{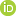}}}
  \email{timo.essig@kit.edu}

  \author{J. Fern\'andez de Bobadilla}
\address{Javier Fern\'andez de Bobadilla:  
(1) IKERBASQUE, Basque Foundation for Science, Maria Diaz de Haro 3, 48013, 
    Bilbao, Basque Country, Spain;
(2) BCAM,  Basque Center for Applied Mathematics, Mazarredo 14, 48009 Bilbao, 
Basque Country, Spain; 
(3) Academic Colaborator at UPV/EHU.} 
\email{jbobadilla@bcamath.org}

  \date{25-5-2020}
  \subjclass[2010]{Primary: 32S60, 14F05, 55N33, 55N30, 55U30}
  \keywords{Singularities, Stratified Spaces, Pseudomanifolds, Poincar\'e Duality, Intersection Cohomology, intersection spaces%
}

\thanks{M.A. and J.F.B. were supported by ERCEA 615655 NMST Consolidator Grant, MINECO by the project 
reference MTM2016-76868-C2-1-P (UCM), by the Basque Government through the BERC 2018-2021 program and Gobierno Vasco Grant IT1094-16, by the Spanish Ministry of Science, Innovation and Universities: BCAM Severo Ochoa accreditation SEV-2017-0718.
J.T.E. was supported by the Canon Foundation and wants to thank Hokkaido University and Prof. T. Ohmoto for their hospitality during his stay there.}

\begin{abstract}
  In this article, we prove that there is a canonical Verdier self-dual intersection space sheaf complex for the middle perversity on Witt spaces that admit compatible trivializations for their link bundles, for example toric varieties. If the space is an algebraic variety our construction takes place in the category of mixed Hodge modules. We obtain an intersection space cohomology theory, satisfying Poincar\'e duality, valid for a class of pseudomanifolds with arbitrary depth stratifications. The main new ingredient is the category of K\"unneth complexes; these are cohomologically constructuble complexes with respect to a fixed stratification, together with additional data, which codifies triviality structures along the strata. In analogy to what Goreski and McPherson showed for intersection homology complexes, we prove that there are unique K\"unneth complexes that satisfy the axioms for intersection space complexes introduced by the first and third author. This uniqueness implies the duality statements in the same scheme as in Goreski and McPherson theory.
\end{abstract} 
\maketitle

\section{Introduction}
Intersection spaces were introduced by Banagl as a Poincar\'e duality homology theory for topological pseudomanifolds
which is an alternative to Goreski and MacPherson intersection homology. A very interesting and motivating feature of the theory is a computation of Banagl, comparing the betti numbers of the intersection homology and homology of intersection spaces for the singular variety appearing in the conifold transition: it hints that homology of intersection spaces is mirror symmetric to intersection homology~(see~\cite{Ban1}). The reader may consult~\cite{BaMax} or~\cite{Ess2} for surveys on the theory. 

The idea of intersection spaces was sketched for the first time in~\cite{Ban1}, and was fully developed for spaces with isolated singularities in~\cite{Ban10}. The construction of intersection spaces is not possible for general pseudomanifolds with non-isolated singularities (several counterexamples exist in the literature), and it is an open problem to determine when they exist and when they yield a homology theory with Poincar\'e duality. 
To understand the connection between intersection space homology and intersection homology or, more general, the connection with pseudomanifolds per se, an approach which is applicable to a much bigger class of spaces is needed. Such an approach is also required for the generalization Banagl's mirror symmetry computation for conifold transitions.

Before we present our main results, let us put them into their context within the intersection spaces theory. 
In \cite{ABIS}, the first and third author proposed a method to approach intersection spaces and their homology for pseudomanifolds with arbitrary stratification depth. 
Before that, spaces of stratification depth greater than one had only been considered by Banagl and the second author in \cite{Ban10,Ban2,Ess} for special types of link fibrations and a low number of strata.

The main novelty of \cite{ABIS} is the following realization: A spatial modifiction that generalizes the previous constructions and is valid for arbitrary stratification depth needs to be implemented for pairs of spaces --modifying the pseudomanifold and its singular set at the same time.
This is done by an obstruction theory type construction, which eventually results in a pair of spaces that are natural candidates to generalize intersection spaces. It is proved that if the link fibrations of the pseudomanifold are trivial and the trivializations of these fibrations verify some compatibility conditions, a canonical intersection space pair associated with the system of trivializations exists. Note, that we explain why toric varieties satisfy these triviality properties in the present article (this was claimed without proof in~\cite{ABIS}).

A second set of results in \cite{ABIS} is the development of a sheaf theoretic approach to intersection space pairs similar to the one of Goreski and MacPherson for intersection homology in \cite{GorMP}. A set of properties, which mimic the axioms for intersection homology complexes, are introduced. Associated with any intersection space, there is a unique sheaf complex satisying those properties, a so called intersection space complex. The homology of the intersection space pair coincides with the hypercohomology of the intersection space complex. Moreover, an obstruction theory for the existence and uniqueness of intersection space complexes is given. It turns out that intersection space complexes --if they exist-- are not necessarily unique.

Although the shifted Verdier dual of an intersection space complex is an intersection space complex as well, this cannot be used to prove Poincar\'e duality for the hypercohomology of intersection space complexes: Since the complexes are not unique, one does not automatically get self-dual complexes as in~\cite{GorMP}. 
In this paper we show how to overcome these difficulties for a class of pseudomanifolds that includes complex toric varieties --the class of psedudomanifolds with trivial link bundles and suitable compatibility conditions for the trivializations. Its precise definition can be found in Section~\ref{sec:trivi}, and coincides with Definition~3.13 of~\cite{ABIS} (re-written in a way which is convenient for our purposes).
The main idea of is to enrich the category and axioms of intersection space complexes in order to be able to obtain uniqueness characterization results leading to Poincar\'e duality in a similar way than in~\cite{GorMP} for intersection homology. Let us give an overview of our method:
Fix a pseudomanifold $X$ and a compatible system of trivializations $(\phi_1,...,\phi_k)$ of the link bundles. We introduce the category of K\"unneth complexes (see Definitions~\ref{def:kunethstr},~\ref{def:kunnethmor}). A K\"unneth complex is, roughly speaking, an element of $\mathcal{D}^b_{cc}(X)$ enriched with trivializations of the restriction of the complex to the link bundles, which are induced by the compatible system of trivializations $(\phi_1,...,\phi_k)$. K\"unneth morphims respect such trivializations. 

We develop the basic properties of the category of K\"unneth complexes and prove that, for any perversity, there exists a unique intersection space complex $\IS (X)$ that is a K\"unneth complex with respect to the compatible system of trivializations $(\phi_1,...,\phi_k)$ (see Propositions~\ref{prop:KuennethIS_existence} and \ref{prop:KuennethIS_uniqueness}). We call it the {K\"unneth intersection space complex of perversity $\pp$} for the compatible system of trivializations $(\phi_{1},...,\phi_k)$. We prove that the Verdier dual of $\IS (X)$ is the  K\"unneth intersection space complex of the complementary perversity $\qq$ in Theorem~\ref{th:main1}. Moreover, the intersection space complex associated with the intersection space pair $(I^{\pp}X,I^{\pp}X_{d-2})$ is $\IS(X)$ (see Remark~\ref{rem:spacetosheaf}). If the stratification is by complex algebraic varieties, then the K\"unneth intersection space complex of perversity $\pp$ lives in the derived category of mixed Hodge modules. As a summary:
\begin{thm}[Main Theorem]
\label{thm:intro}
Let $X$ be a topological stratified pseudomanifold of dimension $d$ with a compatible system of trivializations $(\phi_1,...,\phi_k)$. 
\begin{itemize}
 \item There is a unique K\"unneth intersection space complex for any perversity.
 \item The Verdier dual of the K\"unneth intersection space complex for perversity $\pp$ shifted by dimension $d$, coincides with the K\"unneth intersection space complex for the complementary perversity.
 \item If $X$ is a Witt space the K\"unneth intersection space complex for the intermediate perversity is self-dual: $\mathcal{D}IS^\bullet_{\bar{m}}(X)[-d]\cong IS^\bullet_{\bar{m}}(X)$. 
 \item With the notations above we have Poincar\'e duality isomorphisms $H^k(X,\IS(X))\cong H^{d-k}(X,\ISq(X))^\vee$.
 \item Let $(I^{\pp}X,I^{\pp}X_{d-2})$ be the intersecion space pair associated with the system of trivializations. There is a Poincar\'e duality isomorphism  $H^k(I^{\pp}X,I^{\pp}X_{d-2};\mathbb{Q})\cong H^{d-k}(I^{\qq}X,I^{\qq}X_{d-2};\mathbb{Q})^\vee$. 
 \item If the stratification is by complex algebraic varieties the K\"unneth intersection space complex of perversity $\pp$ lives in the derived category of mixed Hodge modules, and is unique in this category. Consequently, Poincate duality is an isomorphism of mixed Hodge structures.
\end{itemize}
\end{thm}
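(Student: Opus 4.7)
The plan is to derive each of the six assertions from results already stated in this introduction. The first bullet is the combination of Propositions~\ref{prop:KuennethIS_existence} and~\ref{prop:KuennethIS_uniqueness}: existence produces a K\"unneth intersection space complex $\IS$ and uniqueness characterizes it up to canonical isomorphism in the K\"unneth category. The second bullet is Theorem~\ref{th:main1} written out in the present notation.

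For the third bullet, I would first observe that the complementary perversity of the upper middle perversity $\um$ is the lower middle perversity $\lm$, so the second bullet applied with $\pp=\um$ gives $\mathcal{D}\,\mathbf{IS}_{\um}^\bullet(X)[-d]\cong \mathbf{IS}_{\lm}^\bullet(X)$. The Witt hypothesis on $X$ translates, via the K\"unneth axioms, into the statement that the canonical comparison morphism $\mathbf{IS}_{\lm}^\bullet(X)\to\mathbf{IS}_{\um}^\bullet(X)$ induced by $\lm\leq\um$ is a quasi-isomorphism; the uniqueness part of the first bullet then identifies the two K\"unneth complexes. Combining these two isomorphisms yields self-duality.

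For the fourth bullet, the plan is to apply the standard Verdier duality pairing on the compact pseudomanifold $X$, namely $H^k(X,\mathbf{F}^\bullet)\cong H^{d-k}(X,\mathcal{D}\mathbf{F}^\bullet[-d])^\vee$, to $\mathbf{F}^\bullet=\IS$, and then rewrite the right hand side using the second bullet to replace $\mathcal{D}\IS[-d]$ by $\ISq$. The fifth bullet follows by substituting the identification of Remark~\ref{rem:spacetosheaf}, namely $H^k(X,\IS)\cong H^k(\IpX,\IpX_{d-2};\Q)$, on both sides of the fourth bullet, with the analogous formula for $\qq$.

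The sixth bullet requires executing the whole construction of $\IS$ inside the derived category of mixed Hodge modules. The plan is to observe that when the stratification is algebraic, each ingredient used in the inductive construction of $\IS$---truncation, gluing along the open stratification, and the K\"unneth trivializations induced by the compatible system $(\phi_1,\dots,\phi_k)$---has a natural lift to the mixed Hodge module category, and the obstruction-theoretic arguments establishing uniqueness in $\Dbc(X)$ carry over verbatim. Granted this lift, Verdier duality and hypercohomology are functors of mixed Hodge modules, so the Poincar\'e duality isomorphisms of bullets four and five automatically upgrade to isomorphisms of mixed Hodge structures. The step I expect to be the hardest is the third bullet: one must verify carefully that the Witt hypothesis yields an isomorphism not merely in $\Dbc(X)$ but in the enriched category of K\"unneth complexes, i.e.\ that the comparison $\lm\to\um$ is compatible with the trivializations imposed by $(\phi_1,\dots,\phi_k)$, so that the uniqueness of the first bullet can be invoked.
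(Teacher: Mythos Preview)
Your proposal is correct and follows essentially the same route as the paper: the Main Theorem is assembled from Propositions~\ref{prop:KuennethIS_existence} and~\ref{prop:KuennethIS_uniqueness}, Theorem~\ref{th:main1}, Remarks~\ref{rem:compatIS} and~\ref{rem:spacetosheaf}, and Remark~\ref{rem:mhm}, exactly as you outline. The only place where you do more work than the paper is the Witt case: the paper records self-duality as a one-line consequence of Theorem~\ref{th:main1} without spelling out the $\lm$/$\um$ comparison, whereas you explicitly argue that the Witt hypothesis forces the comparison morphism to be a K\"unneth isomorphism so that uniqueness applies---this extra care is warranted and does not diverge from the paper's logic.
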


In the case of toric varieties we provide at least two canonical stratifications with a compatible system of trivializations: the stratification by orbits and the stratification by orbits contained in the singular locus.

\begin{cor}
Toric varieties, with respect to the stratifications mentioned above, have canonical intersection space complexes in the derived category of mixed Hodge modules for any perversity. Therefore their hypercohomology is a Mixed Hodge structure, which satisfies Poincar\'e duality in the case of the middle perversity. 
\end{cor}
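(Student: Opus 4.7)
The plan is to reduce the corollary to the Main Theorem by verifying the only missing hypothesis, namely that each of the two indicated stratifications of a toric variety admits a compatible system of trivializations of its link bundles in the sense of Section~\ref{sec:trivi}. Once this is in place, every bullet of Theorem~\ref{thm:intro} applies verbatim and yields the statements of the corollary.

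First I would recall that a normal toric variety $X_\Sigma$ is stratified by its torus orbits $O_\sigma$, indexed by cones $\sigma\in\Sigma$, and that each orbit has an affine toric neighborhood of the form $O_\sigma\times U_{\sigma}^{\perp}$, where $U_\sigma^\perp$ is the affine toric variety of the ``transverse'' fan obtained by quotienting by the linear span of $\sigma$. The link of $O_\sigma$ in $X_\Sigma$ is thus globally (not just locally) the link of the cone point in $U_\sigma^\perp$, and the projection from the tubular neighborhood onto $O_\sigma$ is a genuine trivial bundle --- this is the key toric feature. For the coarser stratification by orbits contained in the singular locus, the same remark applies after grouping smooth orbits into the open stratum.

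Next I would check the compatibility condition between these trivializations along incident strata. For a pair of cones $\tau<\sigma$ in $\Sigma$, the orbit $O_\tau$ meets the cone neighborhood of $O_\sigma$; the trivialization of the link bundle of $O_\sigma$ pulls back, along the inclusion of strata, to the trivialization induced by the transverse toric structure at $O_\tau$. This is because both trivializations are canonical images of the quotient-fan construction, and the identifications $U_\sigma^\perp$ inside $U_\tau^\perp$ are compatible with the algebraic group action. Verifying these compatibilities precisely --- i.e.\ that the maps $\phi_i$ defined stratum by stratum splice together on overlaps to form a compatible system in the sense of Definitions~\ref{def:kunethstr},~\ref{def:kunnethmor} --- is the one technical point, and I expect it to be the main obstacle; the body of the paper devotes a section to making it rigorous (this fills the gap announced but not proved in~\cite{ABIS}).

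With a compatible system of trivializations $(\phi_1,\dots,\phi_k)$ in hand, the Main Theorem directly supplies, for every perversity $\pp$, a unique K\"unneth intersection space complex $\IS(X_\Sigma)$. Because the stratification is by complex algebraic varieties, the last bullet of Theorem~\ref{thm:intro} places $\IS(X_\Sigma)$ in the derived category of mixed Hodge modules and guarantees its uniqueness there; hence its hypercohomology inherits a canonical mixed Hodge structure. Finally, a toric variety is rationally a Witt space (the odd-dimensional links have vanishing middle rational cohomology by the toric computation of link cohomology), so the third and fourth bullets of Theorem~\ref{thm:intro} yield the self-duality $\mathcal{D}IS^\bullet_{\um}(X_\Sigma)[-d]\cong IS^\bullet_{\um}(X_\Sigma)$ and the resulting Poincar\'e duality isomorphism, which by the MHM statement is an isomorphism of mixed Hodge structures. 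This completes the corollary.
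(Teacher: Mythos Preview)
Your approach is essentially the paper's: the corollary is stated without a separate proof because Section~\ref{sec:torictriv} establishes the compatible system of trivializations for the two indicated stratifications of a toric variety, after which every assertion is a direct specialization of Theorem~\ref{thm:intro} (and Remark~\ref{rem:mhm} for the mixed Hodge module statement).

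One small remark on your Witt justification: for the stratifications in question every stratum has even real codimension (the orbits are complex tori), so the lower and upper middle perversities coincide and the Witt hypothesis is vacuously satisfied; you do not need to invoke a computation of the middle cohomology of the links.
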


 While intersection homology only depends on the topological type of the pseudomanifold, intersection space complexes depend on the stratification. Even when the obstruction theory procedure of~\cite{ABIS} can be carried out until the end and the intersection space complex exists, it is unique only in rare cases (the groups obstructing it are generally non-trivial). Therefore, Verdier self-dual intersection space complexes for the middle perversity are pretty rare. We see the results of this paper as a shift in point of view: the underlying geometric object of a intersection space complex should be a stratified pseudomanifold with a {\em prescribed structure on the link bundles}, and once this structure is fixed properly, one should obtain a unique intersection space complex compatible with it. In this paper we take the first step in this direction by fixing triviality structures. It is an interesting direction to determine what kind of structures can be considered in the case of non trivial link bundles. 
 
 Another direction of development we would like to point out: compute the Hodge numbers of the canonical intersection space homologies for the stratifications of toric varieties introduced above.

\section{Notation}\label{section:notation}
In this section we gather terminology and notation that we use throughout the paper.
For a stratified pseudomanifold $X$ with filtration $X_d\supset X_{d-c_1}\supset ....\supset X_{d-c_k}$ we explain compatible systems of tubular neighborhoods and compatible systems of trivializations $(\phi_{1},...,\phi_k)$ in Section \ref{sec:tubular}. 
Let $\iota_l:X\setminus X_{d-c_l}\to X$, $j_l:X_{d-c_l}\to X$ and $\theta_l:X_{d-c_l}\setminus X_{d-c_{l+1}}\to X$ be the open, closed and locally closed inclusions of singular strata and their complements respectively. For any index $i$ let $\iota^i_l:Z^i_{c_i}\setminus Z^i_{c_i-c_l}\to Z^i_{c_i}$, $j^i_l: Z^i_{c_i-c_l}\to Z^i_{c_i}$ and  $\theta^i_l:Z^i_{c_i-c_l}\setminus Z^i_{c_i-c_{l+1}}\to Z^i_{c_i}$ be the open, closed and locally closed inclusions of the singular strata of the pseudomanifold $Z_{c_i}^i$, which belongs to the tubular neighbourhood of the stratum $X_{d-c_i}$ via $\phi_i$ (see Section~\ref{sec:tubular}). Observe that if $i<l$ then $\iota^i_l$ is the identity and $j^i_l$ is the empty inclusion.
To match our notation with the literature, we also denote the inclusion of the regular part $\iota_1: U := X \setminus X_{d-{c_1}} \hookrightarrow X$ by $i = \iota_1$ and the inclusion of the singular set $j_1: X_{d-c_1} \hookrightarrow X$ by $j=j_1.$

We work in the derived category of cohomologically constructible bounded complexes of sheaves $\Dbc$. By a slight abuse of notation, we denote right derived functors in this category by the same symbol as the functors that induce them. For example, if $i: U \hookrightarrow X$ is an open inclusion, we write $i_*$ for $Ri_*$, the derived functor of the direct image functor $i_*$.
For a map $\lambda$ of (sheaf) complexes, we denote its usual mapping cone by $C(\lambda).$

\section{Systems of trivializations}
\label{sec:trivi}
Let $X_d\supset X_{d-c_1}\supset ....\supset X_{d-c_k}$ be a stratified topological pseudomanifold (see Definition 3.4 of~\cite{ABIS}) of dimension $d$, where $X_{d-c_i}\setminus X_{d-c_{i+1}}$ is the stratum of codimension $c_i$. In this section we recall the notion of a system of trivializations. A stratified pseudomanifold admits a system of trivializations if all the strata admit tubular neighborhoods that are trivial fibre bundles, and the trivializations satisfy certain compatibility relations. 
For the rest of this paper, we assume that the strata $X_{d-c_i}\setminus X_{d-c_{i+1}}$ are connected. This assumption keeps the notation in our proofs in check. The arguments in the general case are analogous, with a more complicated notation. Only the section on toric varieties is written without this assumption.

\subsection{Compatible tubular neighborhoods.}\label{sec:tubular}
First we require that our stratified topological pseudomanifold satisfies the following conditions, which are a slight weakening of the notion of Conical Structure with respect to the stratification, introduced in \cite[Definition 3.5]{ABIS}. For each $i\in\{1,...,k\}$ there is a closed neighborhood $TX_{d-c_i}$ of $X_{d-c_i}\setminus X_{d-c_{i+1}}$ in $X\setminus X_{d-c_{i+1}}$ with the following properties:
\begin{itemize}
\item Let $\overline{TX_{d-c_i}}$ be the closure of $TX_{d-c_i}$ in $X$. There is a locally trivial fibration $\pi_i$ of $2i$-tuples of
spaces with total space
\[ (\overline{TX_{d-c_i}}\setminus X_{d-c_{i+1}})\cap (X,\overline{TX_{d-c_1}}, X_{d-c_1},..., \overline{TX_{d-c_{i-1}}}, X_{d-c_{i-1}},X_{d-c_{i}} ) \]
and base $X_{d-c_i}\setminus X_{d-c_{i+1}}.$
Its fibre is 
$$(Z^i_{c_i},\overline{TZ^i_{c_i-c_1}},Z^i_{c_i-c_1},...,\overline{TZ^i_{c_i-c_{i-1}}}, Z^i_{c_i-c_{i-1}},Z^i_0),$$
where $Z^i_{c_i}\supset Z^i_{c_i-c_1}\supset Z^i_{c_i-c_{i-1}}\supset Z^i_0$ is a topological pseudomanifold of dimension $c_i$ and $0$-dimensional smallest stratum equal to a point, which is a cone (with vertex $Z^i_0$) over a compact pseudomanifold $L^i$ of dimension $c_i-1$, called the transversal link of the stratum $X_{d-c_i}\setminus X_{d-c_{i+1}}$. For each $j\leq i-1$ the subset $TZ^i_{c_i-c_{j}}$ is a tubular neighborhood around the $c_j$-codimensional stratum of $Z^i_{c_i-c_j}$.

\item For any $i_1<i_2$ we have the equality 
$$(\overline{TX_{d-c_{i_1}}}\setminus X_{d-c_{i_1+1}})\cap \overline{TX_{d-c_{i_2}}}=\pi_{i_1}^{-1}((X_{d-c_{i_1}}\setminus X_{d-c_{i_1+1}}))\cap \overline{TX_{d-c_{i_2}}}).$$ Moreover, the compatibility
\begin{equation} 
\label{eq:comp1}
\begin{tikzcd}[column sep=small]
(\overline{TX_{d-c_{i_1}}}\setminus X_{d-c_{i_1+1}})\cap \overline{TX_{d-c_{i_2}}} \ar{dr}{\pi_{i_2}|} \ar{r}{\pi_{i_1}|} &  (X_{d-c_{i_1}}\setminus X_{d-c_{i_1+1}}))\cap \overline{TX_{d-c_{i_2}}} \ar{d}{\pi_{i_2}|} \\
\ & X_{d-c_{i_2}} \setminus X_{d - c_{i_2 + 1}} 
\end{tikzcd}
\end{equation}
is satisfied.
\end{itemize}

\begin{mydef}[Compatible system of neighborhoods] ~\\
\label{def:compatneigh}
Given a topological pseudomanifold $X_d\supset X_{d-c_1}\supset ....\supset X_{d-c_k}$, a system of neighborhoods $TX_{d-c_i}$ of $X_{d-c_i}\setminus X_{d-c_{i+1}}$ in $X\setminus X_{d-c_{i+1}}$, and fibrations $\pi_i$ for $i\in\{1,...,k\}$ satisfying the properties above is a {\em compatible system of tubular neighborhoods}.
\end{mydef}

\subsection{Systems of trivializations}

Let $X_d\supset X_{d-c_1}\supset ....\supset X_{d-c_k}$ be a pseudomanifold that admits a compatible system of tubular neighborhoods as in~\ref{sec:tubular}. 

\begin{figure}[h]
\centering
\begin{tikzpicture}[scale=0.9]
  \path[use as bounding box] (-3cm,3cm) rectangle (5cm,-3cm);
  \clip (0,0) circle (5cm);
   \begin{scope}
      \path[opacity=0]
      (0,0) 
      edge [bend right=40] 
      coordinate[pos=1] (Ul)
      coordinate[pos=0] (Ol)
      (-30:16cm)
      (-30:16cm) 
      coordinate[pos=1] (Or)
      coordinate[pos=0] (Ur)
      (0,0);
      \path[opacity=0]
      (0,0) 
      edge [bend left=40] 
      coordinate[pos=1] (ul)
      coordinate[pos=0] (ol)
      (-30:16cm)
      (-30:16cm) 
      coordinate[pos=1] (or)
      coordinate[pos=0] (ur)
      (0,0);
        \end{scope}
  \begin{scope}
  \foreach \radius in {0.5, 1, 1.5,...,3}
  \filldraw[draw=none,fill opacity=0.15,fill=blue!70!black]  (0,0) circle (\radius cm);
  \end{scope}
  \draw node[thick, blue!70!black] at (265:2.25cm) {$T$};
  \begin{scope}
	\foreach \angle in {-40, -35, -30,...,40}
	\filldraw[draw=none,fill opacity=0.15,fill=yellow!60!brown,bend right=\angle] (0,0) to (0:16cm);
  \end{scope}
  \draw        node[thick, yellow!55!red] at (12.5:4.5cm) {$T'$};
  \begin{scope}
    \clip (0,0) circle (3cm);
\draw[very thick,opacity=0.5, blue!50!black,bend right=25] (0:16cm) to (0,0);
\draw[very thick,opacity=0.5, blue!50!black,->] (23.2:1.7cm) to (23.2:1.5cm);
\foreach \angle in {75.714285714,117.428571429,159.142857142,200.857142856,242.57142857,284.285714284}
  \draw[very thick,opacity=0.5, blue!50!black,->] (\angle:16cm) to (\angle:1.5cm);
\foreach \angle in {75.714285714,117.428571429,159.142857142,200.857142856,242.57142857,284.285714284}
  \draw[very thick,opacity=0.5, blue!50!black] (\angle:16cm) to (0,0);
\draw[very thick,opacity=0.5, blue!50!black,bend left=25] (0:16cm) to (0,0);
\draw[very thick,opacity=0.5, blue!50!black,->] (-23.2:1.7cm) to (-23.2:1.5cm);
\draw node[thick,blue!50!black] at (207.5:2.25cm) {$\pi$};
  \end{scope}
    \begin{scope}
      \path[opacity=0]
      (0,0) 
      edge [bend right=40] 
      coordinate[pos=1] (Ul)
      coordinate[pos=0] (Ol)
      (0:16cm)
      (0:16cm) 
      coordinate[pos=1] (Or)
      coordinate[pos=0] (Ur)
      (0,0);
      \path[clip]
      (Ol) to [bend right=40] (Ul) to (Ur) to (Or) to cycle;
      \draw[yellow!55!red,opacity=0.5, very thick, opacity=0.7, ->] (300:1) arc (300:360:1cm);
      \draw[yellow!55!red,opacity=0.5, very thick, opacity=0.7,->] (300:2) arc  (300:360:2cm);
      \draw[yellow!55!red,opacity=0.5, very thick, opacity=0.7, ->] (300:3) arc (300:360:3cm);
      \draw[yellow!55!red,opacity=0.5, very thick, opacity=0.7,->] (300:4) arc  (300:360:4cm) node[pos=0.6, right,thick] {$\pi'$};
    \end{scope}
    \begin{scope}
      \path[opacity=0]
      (0,0) 
      edge [bend left=40] 
      coordinate[pos=1] (Ul)
      coordinate[pos=0] (Ol)
      (0:16cm)
      (0:16cm) 
      coordinate[pos=1] (Or)
      coordinate[pos=0] (Ur)
      (0,0);
      \path[clip]
      (Ol) to [bend left=40] (Ul) to (Ur) to (Or) to cycle;
      \draw[yellow!55!red, very thick, opacity=0.7, ->] (40:1) arc (40:0:1cm);
      \draw[yellow!55!red, very thick, opacity=0.7,->]  (40:2) arc (40:0:2cm);
      \draw[yellow!55!red, very thick, opacity=0.7, ->] (40:3) arc (40:0:3cm);
      \draw[yellow!55!red, very thick, opacity=0.7,->]  (40:4) arc (40:0:4cm);
    \end{scope}
  \draw[very thick,red!50!brown] 
  	(0,0) -- (0:8cm)
       node[below right,red!50!brown] at (0:4.3cm) {$S'$};
	\begin{scope}
	\clip (0,0) circle (3cm);
	\foreach \angle in {-40, -35, -30,...,40}
	\filldraw[draw=none,fill opacity=0.2,fill=green!50!black,bend right=\angle] (0,0) to (0:16cm);
	\node[green!70!blue!30!black] (A) at (-10:2.5) {$A$};
	\end{scope}
  \node[draw,circle,red!50!black,inner sep=2pt, fill] (X0) at (0,0) {};
  \node[below left,red!50!black] (X0) at (0,0) {$S$};
  \draw node at (315:3.5cm) {$X$};
\end{tikzpicture}
\caption{The intersection of tubular neighbourhoods of strata}
\label{fig:StrataTubes}
\end{figure}
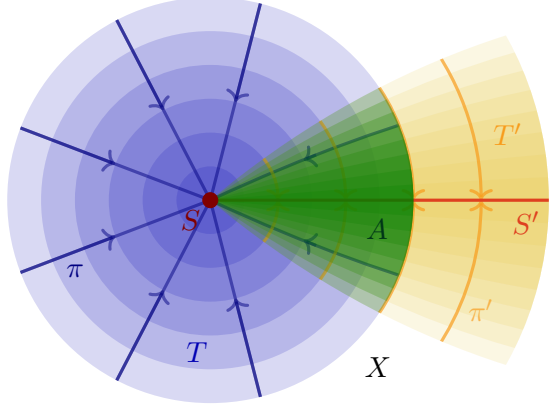
\begin{mydef}[System of trivializations] ~\\
\label{def:sistriv}
A {\em system of trivializations} is a tuple $(\phi_1,\ldots, \phi_k)$ of homeomorphisms
$\phi_i$ of $2i$-tuples from 
$$(\overline{TX_{d-c_i}}\setminus X_{d-c_{i+1}})\cap (X,\overline{TX_{d-c_1}}, X_{d-c_1},..., \overline{TX_{d-c_{i-1}}}, X_{d-c_{i-1}},X_{d-c_{i}})$$ 
to
\begin{equation}
\label{eq:prodstructure}
(Z^i_{c_i},\overline{TZ^i_{c_i-c_1}},Z^i_{c_i-c_1},...,\overline{TZ^i_{c_i-c_{i-1}}}, Z^i_{c_i-c_{i-1}},Z^i_0)\times (X_{d-c_i}\setminus X_{d-c_{i+1}})
\end{equation}
such that $\phi_i$ composed with the projection to the second factor equals $\pi_i$.
\end{mydef}

Let $i_1<i_2$ and consider the intersection 
\begin{equation}
\label{eq:defA}
A:=(\overline{TX_{d-c_{i_1}}}\setminus X_{d-c_{i_1+1}})\cap \overline{TX_{d-c_{i_2}}}.
\end{equation}
The region $A$ is illustrated in Figure \ref{fig:StrataTubes} as the green surface. The tube $\overline{TX_{d-c_{i_2}}}$ of the lower dimensional stratum is labeled $T$ in the figure and highlighted in blue, whereas restricted tube $\overline{TX_{d-c_{i_1}}}\setminus X_{d-c_{i_1+1}}$ is labeled $T'$ and highlighted in yellow. The stratum $X_{d-c_{i_2}}\setminus X_{d-c_{i_2+1}}$ is called $S$ in the picture and the stratum $X_{d-c_{i_1}}\setminus X_{d-c_{i_1+1}}$ is called $S'$.

Using the trivialization $\phi_{i_2}|_A$ we have a homeomorphism 
\begin{equation}
\label{eq:triv1}
\phi_{i_2}|_A:A\to (\overline{TZ^{i_2}_{c_{i_2}-c_{i_1}}}\setminus Z^{i_2}_{c_{i_2}-c_{i_1+1}})\times (X_{d-c_{i_2}}\setminus X_{d-c_{i_2+1}})
\end{equation}
Using the trivialization for $\phi_{i_1}|_A$ we have a homeomorphism
\begin{equation}
\label{eq:triv2}
\phi_{i_1}|_A:A\to Z^{i_1}_{c_{i_1}}\times ((X_{d-c_{i_1}}\setminus X_{d-c_{i_1+1}})\cap \overline{TX_{d-c_{i_2}}}).
\end{equation}
Using the trivilization $\phi_{i_2}$ restricted to $(X_{d-c_{i_1}}\setminus X_{d-c_{i_1+1}})\cap \overline{TX_{d-c_{i_2}}}$ we obtain a homeomorphism
\begin{equation}
\label{eq:triv3}
\phi_{i_2}|:(X_{d-c_{i_1}}\setminus X_{d-c_{i_1+1}})\cap \overline{TX_{d-c_{i_2}}}\to Z^{i_2}_{c_{i_2}-c_{i_1}}\times (X_{d-c_{i_2}}\setminus X_{d-c_{i_2+1}}).
\end{equation}
Combining the homeomorphisms (\ref{eq:triv1}), (\ref{eq:triv2}) and (\ref{eq:triv3}), and using the compatibility (\ref{eq:comp1}), we obtain a homeomorphism
\begin{equation}
\label{eq:comparison}
\begin{tikzcd}
Z^{i_1}_{c_{i_1}}\times Z^{i_2}_{c_{i_2}-c_{i_1}}\times (X_{d-c_{i_2}}\setminus X_{d-c_{i_2+1}}) \ar{d}{\psi_{i_1,i_2} ~= ~(\phi_{i_1}|)^{-1} \, \circ \,  \id_{Z_{c_{i_1}}^{i_1}} \, \circ \, (\phi_{i_2}|)^{-1}} \\
(\overline{TZ^{i_2}_{c_{i_2}-c_{i_1}}}\setminus Z^{i_2}_{c_{i_2}-c_{i_1+1}})\times (X_{d-c_{i_2}}\setminus X_{d-c_{i_2+1}}) 
\end{tikzcd}
\end{equation}
of the form 
\begin{equation}
\label{eq:alphaz}
\psi_{i_1,i_2}(x,y,z)=(\alpha_z(x,y),z)
\end{equation}
where $x$, $y$ and $z$ are points in each of the factors of the source and $\alpha_z$ is a homeomorphism that may depend on $z$.

The last equation has the following meaning. For any $i_2$ and any $z\in X_{d-c_{i_2}}\setminus X_{d-c_{i_2+1}}$ the stratified pseudomanifold $$Z^{i_2}_{c_{i_2}}\supset Z^{i_2}_{c_{i_2}-c_1}\supset Z^{i_2}_{c_{i_2}-c_{i_2-1}}\supset Z^{i_2}_0,$$ 
together with the compatible system of tubular neighborhoods $\overline{TZ^{i_2}_{c_{i_2}-c_{i_1}}}$ for $1\leq i_1\leq i_2$, has a system of trivializations. In fact for any $i_1\leq i_2$ the trivialization is given by the inverse of the homeomorphism $\alpha_z$ defined in~(\ref{eq:alphaz}). 

\begin{mydef}[Compatible system of trivializations] ~\\
\label{def:compatsistriv}
A {\em compatible system of trivializations} of $X_d\supset X_{d-c_1}\supset ....\supset X_{d-c_k}$ is a system such that $\alpha_z$ is independent of $z$ for any $i_1<i_2$.
\end{mydef}
\begin{rem}
\label{rem:compatIS}
Given a compatible system of trivializations of $X_d\supset X_{d-c_1}\supset ....\supset X_{d-c_k}$ and a perversity $\pp$, we have the following.
\begin{enumerate}
\item A compatible system of trivializations is inherited for each of the pseudomanifolds  $Z^i_{c_i}\supset Z^i_{c_i-c_1}\supset Z^i_{c_i-c_{i-1}}\supset Z^i_0$.
\item The construction of~\cite{ABIS} provides a (non-necessarily unique) intersection space pair $(I^{\pp}X,I^{\pp}X_{d-2})$ (see Definition 3.27~of~\cite{ABIS}).
\end{enumerate}
\end{rem}
As a first example for compatible systems of trivializations, we explain how to extend such a system from a pseudomanifold $X$ to its cone $\cone{X}.$
\begin{ex}[Trivializations on a cone]
If $X_d\supset X_{d-c_1}\supset ....\supset X_{d-c_k}$ is a pseudomanifold with a compatible system of trivializations $(\phi_1, \ldots, \phi_k)$ and $Y = \cone{X}$ with vertex $\left\{ v \right\},$ then $Y$ has a filtration $Y = Y_{d+1} \supset Y_{d +1 - c_1} \supset \ldots \supset Y_{d+1 - c_k} \supset Y_0 = \left\{ v \right\}$ with $Y_{d+1 - c_i} = \cone{X_{d-c_i}}$. Moreover, $Y$ has a compatible system of neighbourhouds $TY_{d+1 - c_i} := \cone{TX_{d-c_i}}$ and $TY_0 := Y$  with
\[
\overline{TY_{d+1-c_i}} \setminus Y_{d+1 - c_{i+1}} = \overline{TX_{d-c_i}} \setminus X_{d - c_{i+1}} \times (0,1),
\]
 and the homeomorphisms $\phi_i \times \id$ from $\overline{TY_{d+1-c_i}} \setminus Y_{d+1 - c_{i+1}}$ to 
\[ Z_{c_i}^i \times \left( Y_{d+1 - c_i} \setminus Y_{d+1 - c_{i+1}} \right) = Z_{c_i}^i \times \left( X_{d-c_i} \setminus X_{d - c_{i+1}} \times (0,1) \right) \]
and $\phi_0 = \id: \overline{TY_0} \xrightarrow{\cong} \cone{X} \times \left\{ v \right\}$
give rise to a compatible system of trivializations $(\phi_1 \times \id, \ldots, \phi_k \times \id, \phi_0)$ of $Y = Y_{d+1} \supset Y_{d +1 - c_1} \supset \ldots \supset Y_{d+1 - c_k} \supset Y_0$.
\end{ex}
We present an important class of pseudomanifolds that admit compatible systems of trivializations in the following section: toric varieties. The stratifications that come with such a compatible system of trivializations are inherited from the torus action of such a variety.

\section{Trivialization structure for Toric Varieties}
\label{sec:torictriv}
In this section, we expound how the group action on toric varieties induces a compatible system of trivializations for stratifications which are compatible with the torus action in the sense explained below.

Recall that an algebraic normal variety $X$ is called an $n$-dimensional toric variety if there is group action $\TT \times X \to X$ of the complex torus $\TT \cong (\C^*)^n$ which is almost transitive and effective. As a general reference we use~\cite{Fu}. 

\begin{mydef}
\label{def:toriccompatible}
Let $X$ be complex toric variety. A pseudomanifold structure $X=X_{2d}\supset X_{2d-2c_1}\supset ....\supset X_{2d-2c_k}$ is compatible with the torus action if each non-open stratum is an orbit of the torus action.
\end{mydef}

\begin{rem}
Given a complex toric variety, there are two canonical pseudomanifold structures which are compatible with the torus action on it. The first one has all the orbits as strata. The strata of the second one are the non-singular loci as well as the orbits which are contained in the singular set.
\end{rem}

\subsection{Reminder on toric geometry facts}
A toric variety of complex dimension $d$ is given by a fan $\Delta$ in $\mathbb{Z}^d$, which is a set of rational strongly convex polyhedral cones in $\mathbb{R}^d=\mathbb{Z}^d\otimes\mathbb{R}$ such that each face of a cone in $\Delta$ is also in $\Delta$, and such that any two cones intersect in a common face. Let $X^\Delta_{2d}$ be the toric variety associated with $\Delta$. The orbits of the group action are in a $1:1$ correspondence with the faces of $\Delta$. Given any face $\sigma$ of $\Delta$, the complex dimension of the orbit $O^\Delta_{\sigma}$ associated with it equals $d-\dim(\sigma)$, where $\dim(\sigma)$ denotes the real dimension of the affine subspace spanned by $\sigma$ in $\mathbb{R}^d$. 
Moreover, there is an affine open subset $U^\Delta_{\sigma}$ of $X^\Delta_{2d}$, which is the affine toric variety associated with the polyhedral cone $\sigma$, and $O^\Delta_{\sigma}$ is the only closed orbit of $U^\Delta_{\sigma}$. We denote by $I_\sigma$ the stabilizer of any point of $O^\Delta_{\sigma}$ by the torus action. It is independent of the point in $O^\Delta_\sigma$, because the torus is abelian, and it is isomorphic to $(\C^*)^{\dim(\sigma)}$.  

Let us fix a generic complete flag 
\begin{equation}
\label{eq:genflag}
\mathbb{T}^1\subset\mathbb{T}^2\subset ...\subset\mathbb{T}^d=(\C^*)^d
\end{equation}
of subtori of $(\C^*)^d$. Generic means that $\mathbb{T}^i$ acts transitively on every orbit of dimension $i$. This is equivalent to the fact that for any face $\sigma$, we have the equality $\mathbb{T}^{d-\dim(\sigma)}\cap I_\sigma=\{(1,...,1)\}$, and hence that we have the direct sum splitting 
\begin{equation}
\label{eq:dirsum}
(\C^*)^d=\mathbb{T}^{d-\dim(\sigma)}\oplus I_\sigma.
\end{equation}
We denote by $\rho^\Delta_\sigma:(\C^*)^d\to \mathbb{T}^{d-\dim(\sigma)}$ the corresponding projection to the first factor.

Any face $\sigma$ can be seen as a strongly convex rational polyhedral cone in $V_\sigma$, where $V_\sigma$ is the vector subspace spanned by $\sigma$ (which is defined over $\mathbb{Q}$, and where we consider the lattice given by the intersection of $\mathbb{Z}^d$ with $V_{\sigma}$). We can then associate with $\sigma$ a $\dim(\sigma)$-dimensional toric variety $X^\sigma_{2\dim(\sigma)}$, which is a compactification of the torus $I_\sigma$. 

The group $\mathbb{T}^{d-\dim{\sigma}}$ acts freely on $U^\Delta_\sigma$, and the action induces a product structure  
\begin{equation}
\label{eq:dirproduct}
U^\Delta_\sigma=\mathbb{T}^{d-\dim(\sigma)} \times X^\sigma_{2\dim(\sigma)},
\end{equation}
which extends the direct sum decomposition~(\ref{eq:dirsum}). We denote by $\bar\rho^\Delta_\sigma:(\C^*)^d\to \mathbb{T}^{d-\dim(\sigma)}$ the corresponding projection to the first factor. Notice that $\mathbb{T}^{\dim(\sigma)}$ can be identified with the orbit $O^\Delta_\sigma$. 

Given any face $\sigma$, for any $i\in\{1,... ,\dim(\sigma)\}$ we define the torus $\mathbb{T}_\sigma^i:=\mathbb{T}^{i-d+\dim(\sigma)}\cap I_\sigma$ . We obtain a flag of subtori 
\begin{equation}
\label{eq:genflagsigma}
\mathbb{T}_\sigma^1\subset\mathbb{T}_\sigma^2\subset ...\subset\mathbb{T}_\sigma^{\dim(\sigma)}.
\end{equation}

Given two faces $\sigma,\tau$ of the fan $\Delta$, the inclusion $\tau\subset\sigma$ is equivalent to the inclusion $\overline{O}^\Delta_\tau\supset O^\Delta_\sigma$. Assume the inclusion holds. Then we have that $I_\tau$ is a subgroup of $I_\sigma$. The flag~(\ref{eq:genflagsigma}) satisfies the equality $\mathbb{T}_\sigma^{\dim(\sigma)-\dim(\tau)}\cap I_\tau=\{(1,...,1)\}$. Let $U^\sigma_\tau$ be the affine open subset of the toric variety $X^\sigma_{2\dim(\sigma)}$ corresponding with the face $\tau$. The action of $\mathbb{T}_\sigma^{\dim(\sigma)-\dim(\tau)}$ on $U^\sigma_\tau$ is free and induces a product structure 
\begin{equation}
\label{eq:dirproductsigma}
U^\sigma_\tau=\mathbb{T}_\sigma^{\dim(\sigma)-\dim(\tau)} \times X^\tau_{2\dim(\tau)},
\end{equation}
analogously to the way in which the product structure~(\ref{eq:dirproduct}) was obtained.

Also, if we have the inclusion $\tau\subset\sigma$, then we have the inclusion $U^\Delta_\tau\subset U^\Delta_\sigma$ of affine open subsets of $X_{2d}$. The action of the group $\mathbb{T}^{d-\dim(\sigma)}$ restricts to a free action on $U^\Delta_\tau$, and the product structure~(\ref{eq:dirproduct}) induces a product structure
\begin{equation}
\label{eq:dirproductrestricted}
U^\Delta_\tau=\mathbb{T}^{d-\dim(\sigma)} \times U^\sigma_{\tau}.
\end{equation}
Combining with the product structure~(\ref{eq:dirproductsigma}) we obtain  
\begin{equation}
\label{eq:dirproductcombined}
U^\Delta_\tau=\mathbb{T}^{d-\dim(\sigma)} \times \mathbb{T}_\sigma^{\dim(\sigma)-\dim(\tau)} \times X^\tau_{2\dim(\tau)}=\mathbb{T}^{d-\dim(\tau)} \times X^\tau_{2\dim(\tau)},
\end{equation}
which is the product structure associated with the face $\tau$ in $\Delta$. This implies the following relation of projection mappings
\begin{equation}
\label{eq:relproj}
\rho^\Delta_\sigma|_{U^\Delta_\tau}=\rho^\Delta_\sigma|_{O^\Delta_\tau}\circ\rho^\Delta_{\tau}.
\end{equation}

\subsection{Toric stratified varieties and their trivialization structures}
Consider a complex toric variety $X=X_{2d}\supset X_{2d-2c_1}\supset \ldots\supset X_{2d-2c_k}$ with a pseudomanifold structure compatible with the torus action.
Then $X_{2d-2c_i}\setminus X_{2d-2c_{i+1}}$ is a disjoint union of orbits $O_\sigma$, where $\sigma$ is a face in $\Delta$ of dimension $c_i$. We build a compatible system of tubular neighborhoods and trivializations by increasing induction on the dimension $c_i$. Let $\mathcal{C}$ be the set of faces corresponding to strata. Split $\mathcal{C}=\coprod_{i=1}^k\mathcal{C}_i$, where $\mathcal{C}_i$ is the collection of faces of dimension $c_i$ whose associated orbit is a stratum. 

Given $\sigma\in\mathcal{C}_1$, the affine toric variety $X^\sigma_{2\dim(\sigma)}$ has a unique $0$-dimensional orbit. Let $Y_\sigma$ be the intersection of $X^\sigma_{2\dim(\sigma)}$ with a small closed ball centered at the $0$-dimensional orbit. Define the tubular neighborhood $TO^\Delta_\sigma$ to be the image of $\mathbb{T}^{d-c_1}\times Y_\sigma$ in $U^\Delta_\sigma$ by the product structure~(\ref{eq:dirproduct}). Hence we have a product structure
\begin{equation}
\label{eq:dirproductneigh}
TO^\Delta_\sigma=\mathbb{T}^{d-\dim(\sigma)} \times Y_\sigma=O^\Delta_\sigma \times Y_\sigma,
\end{equation}
where the first factor projection is given by the restriction $\rho^\Delta_\sigma|_{TO^\Delta_\sigma}.$ If the balls are chosen small enough, the neighborhoods $TO^\Delta_\sigma$ are mutually disjoint for varying $\sigma$ in $\mathcal{C}_1$. 

Assume that the tubular neighborhoods $TO^\Delta_\sigma$ have been constructed for every $\sigma\in\coprod_{j=1}^{i-1}\mathcal{C}_i$. 
Choose $\sigma\in\mathcal{C}_i$. As before the affine toric variety $X^\sigma_{2\dim(\sigma)}$ has a unique $0$-dimensional orbit. Let $Y^0_\sigma$ be the intersection of $X^\sigma_{2\dim(\sigma)}$ with a small closed ball centered at the $0$-dimensional orbit. Let $\tau\in \mathcal{C}_\sigma$, where $\mathcal{C}_\sigma$ are the faces in $\mathcal{C}$ contained in $\sigma$. By the induction hypothesis applied to the affine toric variety associated to the cone $\sigma$, with the stratification whose strata are indexed by $\mathcal{C}_\sigma$, there is a tubular neighborhood $TO^\sigma_\tau$ of $O^\sigma_\tau$ in $U^\sigma_\tau$, together with a product structure   
\begin{equation}
\label{eq:dirproductneigh2}
TO^\sigma_\tau=O^\sigma_\tau \times Y_\tau,
\end{equation}
where the first factor projection is given by the restriction $\rho^\sigma_\tau|_{TO^\sigma_\tau}.$ Define
\[ Y_\sigma:=Y^0_\sigma\cup \left(\bigcup_{\tau\in\mathcal{C}_\sigma}(\rho^\sigma_\tau|_{TO^\sigma_\tau})^{-1}(O^\sigma_\tau\cap Y^0_\sigma) \right). \]
Define the tubular neighborhood $TO^\Delta_\sigma$ to be the image of $\mathbb{T}^{d-c_i}\times Y_\sigma$ in $U^\Delta_\sigma$ by the product structure~(\ref{eq:dirproduct}). As before, we have a product structure
\begin{equation}
\label{eq:dirproductneigh3}
TO^\Delta_\sigma=\mathbb{T}^{d-\dim(\sigma)} \times Y_\sigma=O^\Delta_\sigma \times Y_\sigma,
\end{equation}
where the first factor projection is given by the restriction $\rho^\Delta_\sigma|_{TO^\Delta_\sigma}.$ Again, if the balls are chosen small enough, the neighborhoods $TO^\Delta_\sigma$ are mutually disjoint for varying $\sigma$ in $\mathcal{C}_i$.

By construction, the system of neighborhoods and the product structures above are a compatible system of neighborhoods and a compatible system of trivializations.

\section{K\"unneth structures}
Let $X_d\supset X_{d-c_1}\supset ....\supset X_{d-c_k}$ be a stratified pseudomanifold with a compatible system of tubular neighborhoods and a compatible system of trivializations $(\phi_{1},...,\phi_k)$. As we explained above, for any $i\leq k$ the normal slice 
$Z^i_{c_i}\supset Z^i_{c_i-c_1}\supset Z^i_{c_i-c_{i-1}}\supset Z^i_0$ is a  stratified pseudomanifold that inherits a compatible system tubular neighborhoods and a compatible system of trivializations $(\phi^i_1,...,\phi^i_i)$. For each $i_1<i$ the trivialization $\phi^i_{i_1}$ induces a product structure 
\begin{equation}
\label{eq:prodstrucres}
\phi^{i}_{i_1}:TZ^{i}_{c_{i}-c_{i_1}}\to Z^{i_1}_{c_{i_1}}\times (Z^{i}_{c_{i}-c_{i_1}}\setminus Z^{i}_{c_{i}-c_{i_1+1}}).
\end{equation}

Notice, however, that since the lowest dimensional stratum $Z^i_0$ is a point, the trivialization $\phi^i_i$ is meaningless. 

In the following, we define the K\"unneth property for a sheaf complex with respect to a compatible system of trivializations. We do this by induction on the dimension $d$. 

A sheaf complex $\KK \in \Dbc (X_d)$ satisfies the \emph{K\"unneth property} with respect to the compatible system of trivializations $(\phi_{1},...,\phi_k)$ if 
\begin{enumerate}[label=(\roman*)]
 \item for any $i\in \{1,...,k\}$ there exists a sheaf complex $\LL_i \in \Dbc (Z^i_{c_i})$ that satisfies the  \emph{K\"unneth property} with respect to the compatible system of trivializations $(\phi^i_1,...,\phi^i_i)$,  and a trivializing isomorphism 
 $$\beta_i: \KK|_{TX_{d-c_i}\setminus X_{d-c_{i+1}}}\xrightarrow{\cong} \phi_i^* (\pi^i_1)^* \LL_i,$$
 where $\pi_1$ denotes the projection to the first factor in the product structure (\ref{eq:prodstructure}).
 \item For any $i_1<i_2$ the following compatibility relation is satisfied. Consider the restriction $\KK|_A$, where $A$ is defined in~(\ref{eq:defA}). The trivializing isomorphisms $\beta_{i_1}$ and $\beta_{i_2}$ restrict to isomorphisms 
 \begin{equation}
 \label{eq:compatsheaf1}
 \beta_{i_1}|_A: \KK|_{A}  \xrightarrow{\cong} (\phi_{i_1}|_A)^* (\pi^{i_1}_1)^* \LL_{i_1},
 \end{equation}
\begin{equation}
 \label{eq:compatsheaf2}
 \beta_{i_2}|_A: \KK|_{A}  \xrightarrow{\cong} (\phi_{i_2}|_A)^* (\pi^{i_2}_1)^* \LL_{i_2}|_{TZ^{i_2}_{c_{i_2}-c_{i_1}}\setminus Z^{i_2}_{c_{i_2}-c_{i_1+1}}}.
\end{equation}
Since $\LL_{i_2}$ satisfies the K\"unneth property with respect to the compatible system of trivializations $(\phi^{i_2}_1,...,\phi^{i_2}_{i_2}),$ there is a trivializing isomorphism 
$$\beta^{i_2}_{i_1}:\LL_{i_2}|_{TZ^{i_2}_{c_{i_2}-c_{i_1}}\setminus Z^{i_2}_{c_{i_2}-c_{i_1+1}}}\to (\phi^{i_2}_{i_1})^*(\pi^{i_2,i_1}_1)^*\LL_{i_1},$$ 
where $\pi^{i_2,i_1}_1$ denotes the first projection in the product structure~(\ref{eq:prodstrucres}).  Together with~(\ref{eq:compatsheaf2}) this induces an isomorphism
\begin{equation}
 \label{eq:compatsheaf3}
(\phi_{i_2}|_A)^* (\pi^{i_2}_1)^*\beta^{i_2}_{i_1}\circ\beta_{i_2}|_A: \KK|_{A}  \xrightarrow{\cong} (\phi_{i_2}|_A)^* (\pi^{i_2}_1)^* (\phi^{i_2}_{i_1})^*(\pi^{i_2,i_1}_1)^*\LL_{i_1}.
\end{equation}
By the compatibility~(\ref{eq:comp1}) and the fact that the system of trivializations is compatible we have the equality 
$$(\phi_{i_1}|_A)^* (\pi^{i_1}_1)^* \LL_{i_1}=(\phi_{i_2}|_A)^* (\pi^{i_2}_1)^* (\phi^{i_2}_{i_1})^*(\pi^{i_2,i_1}_1)^*\LL_{i_1}.$$
Under this equality, the compatibility relation that has to be satisfied is the equality
\begin{equation}
\label{eq:compatisos}
\beta_{i_1}|_A=(\phi_{i_2}|_A)^* (\pi^{i_2}_1)^*\beta^{i_2}_{i_1}\circ\beta_{i_2}|_A.
\end{equation}
\end{enumerate}

\begin{rem}
 \label{rem:new}
 Observe that when $c_i=d$ the dimension of $Z^i_{c_i}$ is equal to the dimension of $X_d$, but in this case the product structure associated with the stratum is trivial, and the K\"unneth property just predicts an isomorphism $\beta_i: \KK|_{TX_{0}}\xrightarrow{\cong} \LL_i$. By this remark the definition by induction on the dimension is possible.
\end{rem}

\begin{mydef}[K\"unneth structures] ~\\
\label{def:kunethstr}
Again this definition is by induction on the dimension.
Let $X_d\supset X_{d-c_1}\supset ....\supset X_{d-c_k}$ be a stratified pseudomanifold with a compatible system of tubular neighborhoods and a compatible system of trivializations $(\phi_{1},...,\phi_k)$. Let $\KK \in \Dbc (X_d)$. A {\em K\"unneth structure} for $\KK$ is a tuple $(\LL_1, \beta_1,...,\LL_k,\beta_k)$, where each $\LL_i$ is a K\"unneth complex on $Z^i_{c_i}$ and which satisfies the K\"unneth property defined above. 

The set of data $(\KK,(\LL_1, \beta_1,...,\LL_k,\beta_k))$ is called a {\em K\"unneth complex}. If it does not lead to confusion we often denote a  K\"unneth complex by just $\KK$. 
\end{mydef}

\begin{rem}\label{rem:inducedKunnethstr}
Notice that, for any $i\in \{1,...,k\}$, a K\"unneth structure for $\KK$ induces a K\"unneth structure for $\LL_i$ for the inherited compatible system of neighborhoods and trivializations of $Z^i_{d-c_i}$. 
\end{rem}

\begin{mydef}[K\"unneth morphisms] ~\\
\label{def:kunnethmor}
Let $X_d\supset X_{d-c_1}\supset ....\supset X_{d-c_k}$ be a stratified pseudomanifold with a compatible system of tubular neighborhoods and a compatible system of trivializations $(\phi_{1},...,\phi_k)$.
A {\em K\"unneth morphism} 
$$(\KK;(\LL_1, \beta_1,...,\LL_k,\beta_k))\rightarrow (\GG;(\II_1, \beta'_1,...,\II_k,\beta'_k))$$
of K\"unneth complexes is a morphism in the derived category $\Psi: \KK \rightarrow \GG,$ and a sequence of K\"unneth morphisms $\psi_i: \LL_i \rightarrow \II_i$ such that for any $i\in\{1,...,k\}$ the diagram
\[ 
\begin{tikzcd}
\KK|_{TX_{d-c_i}\setminus X_{d-c_{i+1}}}              \ar{r}{\beta_i}[swap]{\cong} \ar{d}{\Psi} 	& (\phi_i)^* (\pi^i_1)^* \LL_1 \ar{d}{\phi_i^* \pi_1^* \psi_i} \\
\GG|_{TX_{d-c_i}\setminus X_{d-c_{i+1}}} \ar{r}{\beta'_i}[swap]{\cong} 			& (\phi_i)^* (\pi^i_1)^* \II_i
\end{tikzcd}
\]
commutes.
\end{mydef}

Observe that the definition above is by induction on the dimension, and is possible because of Remark~\ref{rem:new}.

\begin{rem}
Notice that, for any $i\in \{1,...,k\}$, by the compatibility properties stated above, a K\"unneth morphism 
$$(\KK;(\LL_1, \beta_1,...,\LL_k,\beta_k))\rightarrow (\GG;(\II_1, \beta'_1,...,\II_k,\beta'_k))$$
induces an K\"unneth morphism from $\LL_i$ to $\II_i$ for the inherited K\"unneth structures.
\end{rem}

\begin{lemma}[K\"unneth structure for mapping cones] \label{lem:cones}\ \\
Let $(\KK;(\LL_1, \beta_1,...,\LL_k,\beta_k))$ and $(\GG;(\II_1, \beta'_1,...,\II_k,\beta'_k))$ be K\"unneth complexes and let $\Psi: \KK \rightarrow \GG,$ $\psi_{i}: \LL_{i} \rightarrow \II_{i}$ provide a  K\"unneth morphism between them. Then $cone(\Psi)$ has an induced K\"unneth structure for the same system of trivializations such that all the morphisms of the triangle 
\[ \to\KK\xrightarrow{\Psi}\GG\to cone(\Psi)\xrightarrow{[1]}\]
fit into K\"unneth morphisms.

Suppose we have a commutative diagram of  K\"unneth morphisms
\[ 
\begin{tikzcd}
  (\KK_1;(\LL_{1,1}, \beta_{1,1},...,\LL_{1,k},\beta_{1,k}))            \ar{r}{\Psi_1}[swap]{} \ar{d}{\rho} 	&   (\GG_1;(\II_{1,1}, \beta'_{1,1},...,\II_{1,k},\beta'_{1,k}))                       \ar{d}{\sigma} \\
(\KK_2;(\LL_{2,1}, \beta_{2,1},...,\LL_{2,k},\beta_{2,k})) \ar{r}{\Psi_2}[swap]{} 			& (\GG_1;(\II_{2,1}, \beta'_{2,1},...,\II_{2,k},\beta'_{1,k})),
\end{tikzcd}
\]
Then the K\"unneth morphisms $\rho$ and $\sigma$ induce a K\"unneth morphism from $cone(\Psi_1)$ to $cone(\Psi_2)$, which provides a morphism of the corresponding triangles in which all arrows are K\"unneth morphisms. 
\end{lemma}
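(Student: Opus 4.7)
The plan is to argue by induction on the dimension $d$ of $X_d$, in lockstep with the inductive definition of K\"unneth structures and K\"unneth morphisms. For the inductive step I would declare the candidate K\"unneth structure on $cone(\Psi)$ to be
\[
(cone(\psi_1), \widetilde\beta_1, \dots, cone(\psi_k), \widetilde\beta_k),
\]
where each $cone(\psi_i)$ carries the K\"unneth structure supplied by applying the lemma inductively to the K\"unneth morphism $\psi_i : \LL_i \to \II_i$ on the lower-dimensional normal slice $Z_{c_i}^i$. The trivializing isomorphism $\widetilde\beta_i$ is obtained by taking the induced morphism of mapping cones of the commutative square that witnesses $\Psi$ as a K\"unneth morphism, using that the pullback functors $(\pi_1^i)^*$ and $\phi_i^*$ commute with mapping cones, so that $\phi_i^*(\pi_1^i)^* cone(\psi_i) \cong cone(\phi_i^*(\pi_1^i)^*\psi_i)$ in $\Dbc$.

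Next the compatibility condition~(ii) must be verified on each overlap region $A$ of~(\ref{eq:defA}). Both K\"unneth structures on $\KK$ and on $\GG$ already satisfy~(\ref{eq:compatisos}), and because $\Psi$ is a K\"unneth morphism the trivialization squares on $A$ commute with $\Psi$, $\psi_{i_1}$ and $\psi_{i_2}$. These identities assemble into a commutative cube on $A$, and applying the cone functor to the appropriate faces yields the analogue of~(\ref{eq:compatisos}) for $\widetilde\beta_{i_1}|_A$, $\widetilde\beta_{i_2}|_A$ and the isomorphism $\widetilde{\beta^{i_2}_{i_1}}$ on the cone of $\psi_{i_2}$ supplied by the inductive hypothesis. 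The last claim of the first part, that the triangle maps $\KK \to \GG \to cone(\Psi) \xrightarrow{[1]}$ promote to K\"unneth morphisms, is then immediate: by the very construction of $\widetilde\beta_i$ as a map of cones, the squares involving $\beta_i$, $\beta'_i$ and the slice triangle $\LL_i \to \II_i \to cone(\psi_i)$ commute by definition.

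For the second statement, the given commutative diagram of K\"unneth morphisms produces on each slice $Z^i_{c_i}$ a commutative square of K\"unneth morphisms between $\psi_{1,i}$ and $\psi_{2,i}$ with horizontal arrows $\rho_i$, $\sigma_i$; the inductive hypothesis then yields a K\"unneth morphism $cone(\psi_{1,i}) \to cone(\psi_{2,i})$. Combining with the induced morphism $cone(\Psi_1) \to cone(\Psi_2)$ in $\Dbc(X)$ and invoking the naturality of the $\widetilde\beta_i$ built in the first part gives the required K\"unneth morphism, and its compatibility with the two triangles is automatic. The main obstacle I anticipate is the well-known non-strict functoriality of the mapping cone in a triangulated category: the morphism between cones induced by a commutative square is only defined up to the choice of a filler. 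Hence checking~(ii) without hidden choices requires either working with explicit chain-level representatives, or, more intrinsically, exploiting that the trivializing $\beta_i$ are \emph{isomorphisms} so one transports structure rather than merely extracts a map. This bookkeeping, ultimately forced by the cube coming from~(\ref{eq:compatisos}), is the most delicate part of the argument.
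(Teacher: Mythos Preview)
Your proposal is correct in outline, but it takes a considerably more elaborate route than the paper. The paper's proof is a two-line argument: it invokes the explicit chain-level description $cone(\Psi) = \KK[1] \oplus \GG$ (as graded objects, with the standard cone differential) from \cite{GelfandManin}, and then declares the trivializing isomorphisms to be the direct sums
\[
(\beta_i[1],\beta'_i)\colon \KK[1]|\oplus \GG| \xrightarrow{\ \cong\ } \phi_i^*(\pi_1^i)^*\LL_i[1]\oplus \phi_i^*(\pi_1^i)^*\II_i,
\]
the right-hand side being exactly $\phi_i^*(\pi_1^i)^* cone(\psi_i)$. With this explicit model everything---the compatibility~(ii), the fact that the triangle maps are K\"unneth, and the second assertion about morphisms of triangles---is an immediate componentwise check, so the paper simply calls the second part ``straightforward''.

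The difference is that you work abstractly in the derived category and are therefore forced to confront the non-functoriality of cones; you correctly flag this as the delicate point and propose resolving it either by passing to chain-level representatives or by exploiting that the $\beta_i$ are isomorphisms. The paper commits to the first option from the outset, which removes the bookkeeping entirely: at the level of complexes the cone is strictly functorial, and the induction on dimension and the ``commutative cube'' argument you sketch become unnecessary. Your approach would work, but the paper's shortcut is both shorter and avoids the ambiguity you were worried about.
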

\proof
By~\cite{GelfandManin} III.3.2. we have $cone(\Psi)=\KK[1]\oplus\GG$. The K\"unneth structure for the cone is provided by the morphisms 
$$(\beta_i[1],\beta'_i):\KK[1]| \oplus \GG| \xrightarrow{\cong} \phi_i^* (\pi^i_1)^* \LL_i[1]\oplus\phi_i^* (\pi^i_1)^* \II_i,$$ 
where we restrict $\KK[1]$ and $\GG$ to $TX_{d-c_i}\setminus X_{d-c_{i+1}}.$

The second assertion is straightforward.
\endproof

We want to give some straightforward examples for K\"unneth structures. To do so, we use the notation for the inclusions of singular strata and their complements of $X$ and the $Z_{c_i}^i$ that belong to $X_{d-c_i}$ via the system of trivializations that was explained in Section \ref{section:notation}: $\iota_l:X\setminus X_{d-c_l}\to X$, $j_l:X_{d-c_l}\to X$ and $\theta_l:X_{d-c_l}\setminus X_{d-c_{l+1}}\to X$ and, for any index $i$, $\iota^i_l:Z^i_{c_i}\setminus Z^i_{c_i-c_l}\to Z^i_{c_i}$, $j^i_l: Z^i_{c_i-c_l}\to Z^i_{c_i}$ and  $\theta^i_l:Z^i_{c_i-c_l}\setminus Z^i_{c_i-c_{l+1}}\to Z^i_{c_i}$ are open, closed and locally closed inclusions. To match our notation with the literature, we also denote the inclusion of the regular part $\iota_1: U := X \setminus X_{d-c_1} \hookrightarrow X$ by $i = \iota_1$ and the inclusion of the singular set $j_1: X_{d-c_1} \hookrightarrow X$ by $j=j_1.$

\begin{ex}[K\"unneth property of the constant sheaf]\label{ex:Kunnethstr}
Let $X_d\supset X_{d-c_1}\supset ....\supset X_{d-c_k}$ be a stratified pseudomanifold with a compatible system of tubular neighborhoods and a compatible system of trivializations $(\phi_1, \ldots, \phi_k)$. As easy examples for K\"unneth structures, we look at the constant sheaf $\Q_X \in \Dbc$, the direct image ${i}_* \Q_U \in \Dbc$ of the constant sheaf on the regular part, and the sheaf $j_* j^* i_* \Q_U$. For the constant sheaf $\Q_X$, take $\LL_i := \Q_{Z^i_{c_i}}$. For $i_* \Q_U$ take $\LL_i := (\iota_1^i)_*\Q_{Z^i_{c_i}\setminus Z^i_{c_i-c_1}}$. For $j_* j^* i_* \Q_U$ take $\LL_i := (j_1^i)_* (j_1^i)^*(\iota_1^i)_*\Q_{Z^i_{c_i}\setminus Z^i_{c_i-c_1}}$. 
\end{ex}
There is a useful generalization of the example above. 
\begin{ex}
\label{ex:generalized}
Let $G$ be a abelian group. Denote by $\underline{G}_Y$ be the constant sheaf with stalk $G$ on $Y$. The complex $(\theta_l)_*\underline{G}_{X_{d-c_l}\setminus X_{d-c_{l+1}}}$ has canonical a K\"unneth structure with $\LL_i=(\theta^i_l)_*\underline{G}_{Z^i_{c_i-c_l}\setminus Z^i_{c_i-c_{l+1}}}$ induced from the system of trivializations. 
\end{ex}

\begin{prop}
\label{prop:basicpropertieskunneth}
Let $\KK$ be a constructible complex and $(\LL_1, \beta_1,...,\LL_k,\beta_k)$ be a K\"unneth structure for $\KK$. 
The following are satisfied
\begin{enumerate}[label=(\Roman*)]
 \item the complex $(\iota_l)_*(\iota_l)^*\KK$ has a K\"unneth structure given by 
 $$((\iota^1_l)_*(\iota^1_l)^*\LL_1, \gamma_1,...,(\iota^k_l)_*(\iota^k_l)^*\LL_k,\gamma_k),$$
 where $\gamma_i$ is the composition of $(\iota_l)_*(\iota_l)^*\beta_i$ and the natural isomorphism from $(\iota_l)_*(\iota_l)^*\phi_i^* (\pi^i_1)^* \LL_i$ to $\phi_i^* (\pi^i_1)^*(\iota^i_l)_*(\iota^i_l)^* \LL_i$.
\item the complex $(\iota_l)_!(\iota_l)^!\KK$ has a K\"unneth structure given as above, replacing $(\iota^i_l)_*$ and $(\iota^i_l)^*$ by $(\iota^i_l)_!$ and $(\iota^i_l)^!$ respectively.
 \item the complex $(j_l)_*(j_l)^*\KK$ has a K\"unneth structure given as in (I), replacing $(\iota^i_l)_*$ and $(\iota^i_l)^*$ by $(j^i_l)_*$ and $(j^i_l)^*$ respectively.
 \item the complex $(\theta_l)_*(\theta_l)^*\KK$ has a K\"unneth structure given as in (I), replacing $(\iota^i_l)_*$ and $(\iota^i_l)^*$ by $(\theta^i_l)_*$ and $(\theta^i_l)^*$ respectively.
 \item the adjunction morphisms $\KK \rightarrow (\iota_l)_*(\iota_l)^*\KK,$ $\LL_i \rightarrow (\iota^i_l)_*(\iota^i_l)^*\LL_i$ provide a morphism of K\"unneth structures. The same is true for the adjunctions $\KK \rightarrow (j_l)_*(j_l)^*\KK$, $\KK \rightarrow (\theta_l)_*(\theta_l)^*\KK$ and $(\iota_l)_!(\iota_l)^!\KK\to \KK$. 
\end{enumerate}
\end{prop}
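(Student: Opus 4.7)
The plan is to proceed by induction on the dimension $d$, mirroring the inductive definition of K\"unneth structures. The unifying observation is that each functor $F \in \{(\iota_l)_*(\iota_l)^*,\, (\iota_l)_!(\iota_l)^!,\, (j_l)_*(j_l)^*,\, (\theta_l)_*(\theta_l)^*\}$ commutes up to natural isomorphism with restriction to the tubular neighbourhoods and with the pullbacks $\phi_i^*$ and $(\pi^i_1)^*$, by open or proper base change as appropriate. Hence each trivialization $\beta_i$ transports to a trivialization $\gamma_i$ for $F\KK$, and the compatibility relations among the $\beta_i$'s transfer to the $\gamma_i$'s by naturality.

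I sketch the argument for (I) in detail; the remaining cases are parallel. To construct $\gamma_i$, restrict $(\iota_l)_*(\iota_l)^*\KK$ to $TX_{d-c_i}\setminus X_{d-c_{i+1}}$; since $\iota_l$ is open, open base change identifies this restriction with $(\widetilde{\iota}_l)_*(\widetilde{\iota}_l)^*\bigl(\KK|_{TX_{d-c_i}\setminus X_{d-c_{i+1}}}\bigr)$, where $\widetilde{\iota}_l$ is the restriction of $\iota_l$ to $(TX_{d-c_i}\setminus X_{d-c_{i+1}})\setminus X_{d-c_l}$. Applying $(\widetilde{\iota}_l)_*(\widetilde{\iota}_l)^*$ to $\beta_i$, and then using a second open base change across $\pi^i_1$ (legitimate because the product decomposition supplied by $\phi_i$ identifies $\widetilde{\iota}_l$ with $\iota^i_l\times\id$), produces an isomorphism onto $\phi_i^*(\pi^i_1)^*(\iota^i_l)_*(\iota^i_l)^*\LL_i$. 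Define $\gamma_i$ to be this composition. By Remark~\ref{rem:inducedKunnethstr} and the induction hypothesis applied to $\LL_i$, the complex $(\iota^i_l)_*(\iota^i_l)^*\LL_i$ already carries a K\"unneth structure on $Z^i_{c_i}$, so the recursive data demanded by Definition~\ref{def:kunethstr}(i) is in place.

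The main obstacle is verifying the compatibility relation~(\ref{eq:compatisos}) for the new pair $(\gamma_{i_1},\gamma_{i_2})$ on the intersection $A$ from~(\ref{eq:defA}). Since every ingredient in the construction of $\gamma_i$ is a natural transformation, the equality~(\ref{eq:compatisos}) satisfied by $(\beta_{i_1},\beta_{i_2})$ transports to the analogous equality for $(\gamma_{i_1},\gamma_{i_2})$, once one invokes the induction hypothesis applied to assertion~(I) for $\LL_{i_2}$ in order to recognise $(\iota^{i_2}_l)_*(\iota^{i_2}_l)^*\beta^{i_2}_{i_1}$ as playing the role of $\beta^{i_2}_{i_1}$ for the new structure; this amounts to a finite diagram chase of base change squares. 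Cases (II)--(IV) run along the same template with the appropriate base change theorem: for (II) one uses $(\iota_l)^!=(\iota_l)^*$ and the $_!$-variant of open base change, for (III) proper base change along the closed inclusion $j_l$, and for (IV) one factors $\theta_l$ as the composition of an open and a closed inclusion and combines (I) and (III). Finally, (V) is immediate: each of the adjunction morphisms $\mathrm{id}\to(\iota_l)_*(\iota_l)^*$, $\mathrm{id}\to(j_l)_*(j_l)^*$, $\mathrm{id}\to(\theta_l)_*(\theta_l)^*$ and $(\iota_l)_!(\iota_l)^!\to\mathrm{id}$ is a natural transformation of functors, so the square required by Definition~\ref{def:kunnethmor} commutes by naturality applied to $\beta_i$.
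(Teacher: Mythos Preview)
Your proposal is correct and is precisely the kind of argument the authors had in mind: the paper's own proof of this proposition reads, in its entirety, ``Left to the reader.'' Your sketch---induction on dimension together with open (respectively proper) base change to commute the relevant functors past restriction, $\phi_i^*$ and $(\pi^i_1)^*$, followed by naturality to propagate the compatibility~(\ref{eq:compatisos}) and the adjunction squares---is the natural way to fill in what the paper omits.
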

\proof
Left to the reader.
\endproof

\begin{lemma}
\label{lem:trivialidad}
Let $\KK$ be a K\"unneth complex. For any stratum there are integers $a\leq b$, abelian groups $G_r$ for $a\leq r\leq b$ and a K\"unneth isomorphism
$$(\theta_{n})_*(\theta_{n})^*\KK\cong \oplus_{r=a}^{b}(\theta_{n})_*(\underline{G_r})_{X_{d-c_{n}}\setminus X_{d-c_{n+1}}}[-r],$$
where $(\underline{G_r})_{X_{d-c_{n}}\setminus X_{d-c_{n+1}}}[-r]$ denotes the constant sheaf at $X_{d-c_{n}}\setminus X_{d-c_{n+1}}$ with stalk $G_r$ shifted to degree $r$, and the K\"unneth structure considered in each direct summand on the right is defined in Example~\ref{ex:generalized}. 
\end{lemma}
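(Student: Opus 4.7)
The plan is to proceed by induction on the dimension $d$, parallel to the inductive construction of K\"unneth structures. Fix the stratum $S := X_{d-c_n}\setminus X_{d-c_{n+1}}$ and write $T$ for the restricted tube $\overline{TX_{d-c_n}}\setminus X_{d-c_{n+1}}$. Under $\phi_n$ the inclusion of $S$ into $T$ corresponds to $s\mapsto (Z^n_0,s)$, where $Z^n_0$ is the cone vertex of the normal slice $Z^n_{c_n}$. Restricting the trivializing isomorphism $\beta_n:\KK|_T\xrightarrow{\cong}\phi_n^*(\pi^n_1)^*\LL_n$ to $S$ therefore gives
\[
(\theta_n)^*\KK\;\cong\;\pi_S^*\bigl(\LL_n|_{Z^n_0}\bigr),
\]
where $\pi_S:S\to\mathrm{pt}$ is the constant map; that is, $(\theta_n)^*\KK$ is constant on $S$ with stalk the bounded complex $\LL_n|_{Z^n_0}$.

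Since we work over $\Q$, the bounded complex $\LL_n|_{Z^n_0}$ of $\Q$-vector spaces splits (non-canonically) as $\bigoplus_{r=a}^b G_r[-r]$ with $G_r:=H^r(\LL_n|_{Z^n_0})$; I would fix such a splitting once and for all. Pulling it back to $S$ and applying $(\theta_n)_*$ yields the desired direct-sum decomposition at the level of underlying sheaf complexes. It remains to promote this to an isomorphism of K\"unneth complexes.

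By Proposition~\ref{prop:basicpropertieskunneth}(IV) the induced K\"unneth structure on $(\theta_n)_*(\theta_n)^*\KK$ replaces $\LL_i$ by $(\theta^i_n)_*(\theta^i_n)^*\LL_i$, while by Example~\ref{ex:generalized} the K\"unneth structure on the proposed right hand side replaces $\LL_i$ by $\bigoplus_r(\theta^i_n)_*(\underline{G_r})_{Z^i_{c_i-c_n}\setminus Z^i_{c_i-c_{n+1}}}[-r]$. For $i<n$ both sides vanish, since $\theta^i_n$ is the empty inclusion. For $i\geq n$ I would apply the inductive hypothesis to the K\"unneth complex $\LL_i$, which sits on a space of strictly smaller dimension, obtaining an analogous decomposition of $(\theta^i_n)_*(\theta^i_n)^*\LL_i$. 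The key point is that the cohomology groups arising in this inductive decomposition coincide with the $G_r$ already chosen for $\LL_n|_{Z^n_0}$, because the trivialization $\beta^i_n:\LL_i|_{TZ^i_{c_i-c_n}\setminus Z^i_{c_i-c_{n+1}}}\cong(\phi^i_n)^*(\pi^{i,n}_1)^*\LL_n$ inherent to the K\"unneth structure of $\LL_i$ identifies the stalk at every point of the $n$-th stratum of $Z^i_{c_i}$ with $\LL_n|_{Z^n_0}$. Transporting the fixed splitting of $\LL_n|_{Z^n_0}$ through the various $\beta^i_n$ produces \emph{one coherent} choice of splitting at every level $i$, so that the same groups $G_r$ appear everywhere.

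The main obstacle I foresee is bookkeeping the compatibility~(\ref{eq:compatisos}) across levels to ensure that the transported splittings assemble into a genuine K\"unneth morphism rather than merely a collection of isomorphisms of the component complexes. Concretely, on an overlap $A=(\overline{TX_{d-c_{i_1}}}\setminus X_{d-c_{i_1+1}})\cap\overline{TX_{d-c_{i_2}}}$ one must verify that the splitting obtained by transporting via $\beta_{i_1}$ agrees with the one obtained by first transporting via $\beta_{i_2}$ and then via $\beta^{i_2}_{i_1}$. This is forced by~(\ref{eq:compatisos}) applied to $\KK$, together with the analogous compatibilities inside each $\LL_i$ supplied by the inductive hypothesis, because once the splitting of $\LL_n|_{Z^n_0}$ is fixed the intertwining diagrams are obtained by pulling back a single identity. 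With this compatibility in hand the constructed isomorphism is a K\"unneth isomorphism by construction, completing the induction.
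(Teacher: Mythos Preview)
Your proposal is correct and follows the same line as the paper's proof: use the trivialization $\beta_n$ to identify $(\theta_n)^*\KK$ with the pullback of the stalk $\LL_n|_{Z^n_0}$, split that bounded complex as a direct sum of its cohomology groups, and then invoke Example~\ref{ex:generalized}. The paper's argument is considerably terser---it treats the K\"unneth-morphism verification as immediate from Example~\ref{ex:generalized} once the splitting of $\LL_n|_{Z^n_0}$ is fixed---whereas you unfold this into an explicit induction on the dimension and a compatibility check via~(\ref{eq:compatisos}); your extra care is not wrong, but the paper regards it as routine.
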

\proof
The proof is an immediate consequence of Property~(i) of K\"unneth structures. It implies that $\theta_{n}^* \KK$ is isomorphic to the pullback of a sheaf complex over a point, which is just a chain complex of abelian groups. Since every chain complex is quasi-isomorphic to the direct sum of its homology groups, there is an isomorphism to a direct sum of constant sheaves
$$(\theta_{n})^*\KK\cong \oplus_{r=a}^{b}(\underline{G_r})_{X_{d-c_{n}}\setminus X_{d-c_{n+1}}}[-r],$$
and Example~\ref{ex:generalized} gives the statement.
\endproof

\section{The K\"unneth intersection space complex}\label{subs:KIScomplex}
In the following, we construct a K\"unneth intersection space complex of sheaves for any perversity, that is a K\"unneth complex that satisfies the axioms [AXS1] of \cite{ABIS}. We show that this complex is characterized (up to K\"unneth isomorphism) by a set of properties, just as for intersection homology. We use that result to show that the Verdier dual of such a sheaf complex is again a K\"unneth intersection space complex for the complementary perversity. As a consequence, for the intermediate perversity, and in the case of Witt spaces we get is self-dual complexes. This establishes Poincar\'e duality for intersection space cohomology for pseudomanifolds with compatible trivializations.

We remind axioms $[AXS1]$ for convenience of the reader. Let $X_d\supset X_{d-c_1}\supset ....\supset X_{d-c_k}$ be a stratified pseudomanifold with a compatible system of tubular neighborhoods and a compatible system of trivializations $(\phi_{1},...,\phi_k)$. Let $\pp$ be a perversity with complementary perversity $\qq$. A complex $B^\bullet\in D^b_{cc}(X_d)$ satisfies $[AXS1]$ if for any $l\leq k$
\begin{itemize}
    \item[($a$)] $B^\bullet_{|X_d\setminus X_{d-c_1}}$ is quasi-isomorphic to $\Q_{X_d\setminus X_{d-c_1}}$,
    \item[($b$)] the cohomology sheaf $\mathcal{H}^i(B^\bullet)$ is $0$ if $i \notin \{0,1,...,d\}$,
    \item[($c_l$)] $\mathcal{H}^i((j_l^* B^\bullet)|_{X_{d-c_l}\setminus X_{d-c_{l+1}}})$ is equal to $0$ if $i \leq \bar q(c_l)$,
    \item[($d_l$)] the natural morphism 
    $$\mathcal{H}^i((j_l^* B^\bullet)|_{X_{d-c_l}\setminus X_{d-c_{l+1}}}\rightarrow
    \mathcal{H}^i((j_l^* i_{l *}i_{l}^* B^\bullet)|_{X_d\setminus X_{d-c_{l+1}}})$$
    is an isomorphism if $i> \bar q(c_l)$.
\end{itemize}

\textsc{Properties $[AXKS1_{\pp}]_n$}: fix $n \in \left\{ 1, \ldots, k+1 \right\}$. We say that a K\"unneth complex 
\[ \left(\BB(X), (\BB(Z_{c_1}^1), \beta_1, \ldots, \BB(Z_{c_k}^k), \beta_k) \right) \] 
satisfies the properties $[AXKS1_{\pp}]_n$ if 
\begin{enumerate}[label=$(\alph*_n)$]
\item the restrictions 
\begin{align*}
\BB(X)&|_{X \setminus X_{d - c_n}} \in \Dbc (X \setminus X_{d - c_n}) ~\text{and} \\
\BB(Z_{c_i}^i)&|_{Z_{c_i}^i \setminus Z_{c_i - c_n}^i} \in \Dbc (Z_{c_i}^i \setminus Z_{c_i - c_n}^i)
\end{align*}
satisfy the axioms [AXS1] for perversity $\pp$.
\item there are isomorphisms of K\"unneth complexes
	\begin{align*}
 	(\iota_1)_* (\iota_1)^* \BB (X) &\xrightarrow{\cong} (\iota_1)_* \Q_{X \setminus X_{d-c_1}}, \\
	(\iota_1^i)_* (\iota_1^i)^* \BB (Z_{c_i}^i) &\xrightarrow{\cong} (\iota_1^i)_* \Q_{Z_{c_i}^i\setminus Z_{c_i-c_1}^i}
	\end{align*}
\item the adjunction morphisms $\BB(X) \to (\iota_n)_* (\iota_n)^* \BB(X)$ and $\BB(Z_{c_i}^i) \to (\iota_n^i)_* (\iota_n^i)^* \BB(Z_{c_i}^i)$ are isomorphisms of K\"unneth complexes.
\end{enumerate}
On the $Z_{c_i}^i$ we always use the compatible system of trivializations inherited by $X$ and also the induced K\"unneth structure on $\BB(Z_{c_i}^i)$ mentioned in Remark \ref{rem:inducedKunnethstr}.

Using a similar technique as in \cite[Theorem 7.3]{ABIS} and the definition of K\"unneth structures, we construct a sequence 
\[
\left( \KK_j(X), (\KK_j (Z_{c_1}^1, \beta_1, \ldots, Z_{c_k)^k}, \beta_k \right)_{j \in \left\{ 1, \ldots k+1 \right\}}
\]
of K\"unneth complexes such that $\KK_n$ satisfies the properties $[AXKS1_{\pp}]_n.$ 

$\mathbf{n=1.}$ We set $\KK_1 (X) := (\iota_1)_* \Q_{X \setminus X_{d-c_1}}$ and $\KK_1 (Z_{c_i}^i) := (\iota_1^i)_* \Q_{Z_{c_i}^i \setminus Z_{c_i - c_1}^i}$. We explain in Example~\ref{ex:Kunnethstr} that this gives rise to a K\"unneth complex and it is obvious that this complex satisfies the properties $[AXKS1_{\pp}]_1,$ since the spaces $X \setminus X_{d-c_1}$ and $Z_{c_i}^{i} \setminus Z_{c_i - c_1}^i$ are nonsingular.

\vspace{1ex}
$\mathbf{n \to n+1.}$ Let $\left( \KK_n(X), (\KK_n (Z_{c_1}^1), \beta_1, \ldots, \KK_n(Z_{c_k}^k), \beta_k \right)$ be a K\"unneth complex on $X$ that satisfies the properties $[AXKS1_{\pp}]_n$. We extend it to a K\"unneth complex that satisfies the properties $[AXKS1_{\pp}]_{n+1}$. We need to define $\KK_{n+1} (X)$ and $\KK_{n+1} (Z_{c_i}^i)$ for all $i \in \left\{ 1, \ldots, l \right\}.$
If $\mathbf{i<n}$ we define $\KK_{n+1} (Z^i_{c_i}):=\KK_{n} (Z^i_{c_i})$. 

For $\mathbf{i=n},$ the space $Z^n_{c_n}$ is a cone with vertex $v$. We denote by $j_v:\{v\}\to Z^n_{c_n}$ the closed inclusion and $i_{\rZ^n_{c_n}}:\rZ^n_{c_n}\to Z^n_{c_n}$ the open inclusion of the punctured cone. 

By \cite[Theorem 9.10]{ABIS}, the natural map 
$$\tau_{\leq \bar q(n)} {j_v}_* j_v^* {i_{\rZ^n_{c_n}}}_* \KK_{n} (Z^n_{c_n})|_{\rZ^n_{c_n}}\to {j_v}_* j_v^* {i_{\rZ^n_{c_n}}}_* \KK_{n} (Z^n_{c_n})|_{\rZ^n_{c_n}}$$
has a unique splitting 
$$\lambda: {j_v}_* j_v^* {i_{\rZ^n_{c_n}}}_*  \KK_{n} (Z^n_{c_n})|_{\rZ^n_{c_n}} \to \tau_{\leq \bar q(c_n)} {j_v}_* j_v^* {i_{\rZ^n_{c_n}}}_*\KK_{n} (Z^n_{c_n})|_{\rZ^n_{c_n}}.$$
Define $\KK_{n+1}(Z^n_{c_n}):=\cone{\lambda\circ\alpha}[-1]$, where
$$\alpha:{i_{\rZ^n_{c_n}}}_* \KK_{n} (Z^n_{c_n})|_{\rZ^n_{c_n}}\to {j_v}_* j_v^* {i_{\rZ^n_{c_n}}}_* \KK_{n} (Z^n_{c_n})|_{\rZ^n_{c_n}}$$
is the adjunction morphism.

For $\mathbf{i > n}$ we proceed as follows: by Property $(c_n)$, the adjunction $\KK_n (Z_{c_n}^n) \to (\iota_n^n)_* (\iota_n^n)^* \KK_n (Z_{c_n}^n)$ is an isomorphism. Then, the K\"unneth structure gives isomorphisms
\begin{align*} 
\KK_n (X)|_{TX_{d-c_n} \setminus X_{d-c_{n+1}}} & \cong \phi_n^* (\pi_1^n)^* \KK_n (Z_{c_n}^n) \cong \phi_n^* (\pi_1^n)^* (\iota_n^n)_* \KK_n (Z_{c_n}^n)|_{\rZ^n_{c_n}} \\
& = (\theta_n)_*(\pi^n_1\circ \phi_n|_{X_{d-c_n}\setminus X_{d-c_{n+1}}})^* j_v^* {i_{\rZ^n_{c_n}}}_*  \KK_{n} (Z^n_{c_n})|_{\rZ^n_{c_n}},
\end{align*}
where we used the equality $\pi_1^n \circ \phi_n \circ \theta_n = j_v \circ \phi_n|_{X_{d-c_n} \setminus X_{d-c_{n+1}}} \circ \pi_1^n$ in the second line. 
Define $\varphi_{\lambda}$ as the K\"unneth morphism given by composition of the adjunction morphism 
\[
a: \KK_n (X) \to (\theta_n)_* (\theta^n)^* \KK_n (X)
\]
 with the splitting 
\[
\begin{tikzcd}
(\theta_n)_*(\pi^n_1\circ \phi_n|_{X_{d-c_n}\setminus X_{d-c_{n+1}}})^* j_v^* {i_{\rZ^n_{c_n}}}_*  \KK_{n} (Z^n_{c_n})|_{\rZ^n_{c_n}} \ar{d} \\
(\theta_n)_*\tau_{\leq \bar q(c_n)}(\pi^n_1\circ \phi_n|_{X_{d-c_n}\setminus X_{d-c_{n+1}}})^* j_v^* {i_{\rZ^n_{c_n}}}_*  \KK_{n} (Z^n_{c_n})|_{\rZ^n_{c_n}}
\end{tikzcd}
\]
induced by $\lambda$. We then set 
\[ \KK_{n+1}(X) := (\iota_{n+1})_*(\iota_{n+1})^*\cone{\varphi_{\lambda}} [-1]. \]
Since the restriction $\theta_n: X_{d-c_n} \setminus X_{d-c_{n+1}} \hookrightarrow U_{k+1}$ is a closed inclusion and direct images of closed inclusions commute with truncations, the arguments in the proof of \cite[Theorem 7.3]{ABIS} (see p.39 therein) can be applied to deduce that $\KK_{n+1} (X)|_{X \setminus X_{d-c_{n+1}}}$ satisfies the [AXS1] properties. The same is true for the analogously defined $\KK_{n+1} (Z_{c_i}^i)|_{Z^i_{c_i - c_{n+1}}}$. 
Proposition~\ref{prop:basicpropertieskunneth} and Lemma~\ref{lem:cones} give, that 
\[ \left( \KK_{n+1}(X), (\KK_{n+1} (Z_{c_1}^1), \beta_1, \ldots, \KK_{n+1}(Z_{c_k}^k), \beta_k \right) \]
 is a K\"unneth complex.
Property~$(b)$ is satisfied, since Property~$(b)$ holds for $\KK_n(X)$ and there is an isomorphism of K\"unneth structures $\KK_{n}(X)\cong(\iota_{n})_*(\iota_{n})^*\KK_{n+1}(X)$. Property~$(c)$ for $\KK_{n+1}(X)$ holds because the composition $(\iota_{n+1})^*(\iota_{n+1})_*$ is equal to the identity.

Taking $n = k+1$, we proved the following.

\begin{mydef}
 Let $X_d\supset X_{d-c_1}\supset ....\supset X_{d-c_k}$ be a stratified pseudomanifold with a compatible system of tubular neighborhoods and a compatible system of trivializations $(\phi_{1},...,\phi_k)$. Let $\pp$ be a perversity. A K\"unneth intersection space complex 
 $(\IS (X),(\IS (Z^1_{c_1}),\beta_1,...,\IS (Z^k_{c_k}),\beta_k))$ for perversity $p$ (KIS-complex, for short) is a K\"unneth complex satisfying the following Properties $[AXKS1_{\pp}]$. 
 \begin{itemize}
 \item The complexes $\IS (X), ~ \IS (Z_{c_i}^i), ~ i \in \left\{ 1, \ldots, k \right\}$ satisfy the properties $[AXS1]$ for perversity $\pp$.
 \item there are K\"unneth isomorphisms 
	\begin{align*}
 	(\iota_1)_*(\iota_1)^*\IS (X) & \cong (\iota_1)_*\Q_{X\setminus X_{d-c_1}} ~ \text{and}\\
 	(\iota_1^i)_*(\iota_1^i)^*\IS (Z_{c_i}^i) & \cong (\iota_1^i)_*\Q_{Z_{c_i}^i \setminus Z_{c_i-c_1}^i}
	\end{align*}
\end{itemize}
(We again use the inherited compatible systems of trivializations and the induced K\"unneth structures on the $Z_{c_i}^i.$) 
\end{mydef}

\begin{prop}[Existence of KIS-complexes]\label{prop:KuennethIS_existence}
Given a stratified pseudomanifold with a compatible system of tubular neighborhoods and a compatible system of trivializations, a K\"unneth intersection space complex exists for any perversity.
\end{prop}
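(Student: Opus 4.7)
\proof[Proof proposal]
My plan is to prove existence by carrying out the inductive construction of the sequence $(\KK_n(X),(\KK_n(Z^1_{c_1}),\beta_1,\ldots,\KK_n(Z^k_{c_k}),\beta_k))$ that is sketched in the paragraphs preceding the statement, and then to set $\IS(X):=\KK_{k+1}(X)$ and $\IS(Z^i_{c_i}):=\KK_{k+1}(Z^i_{c_i})$. The definition of a KIS-complex is precisely the property $[AXKS1_{\pp}]_{k+1}$, so once I verify that each $\KK_n$ is a well-defined K\"unneth complex satisfying $[AXKS1_{\pp}]_n$, the proposition follows by taking $n=k+1$.

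The base case $n=1$ is immediate: the complex $(\iota_1)_*\Q_{X\setminus X_{d-c_1}}$ (and the corresponding choices on the normal slices) carries a K\"unneth structure by Example~\ref{ex:Kunnethstr}, and all three conditions $(a_1),(b_1),(c_1)$ are trivial because $X\setminus X_{d-c_1}$ and each $Z^i_{c_i}\setminus Z^i_{c_i-c_1}$ are nonsingular. For the inductive step, I proceed by increasing induction on the dimension of $X$ in parallel with increasing $n$: assuming the K\"unneth intersection space complexes are already built on every strictly lower-dimensional normal slice, I set $\KK_{n+1}(Z^i_{c_i}):=\KK_n(Z^i_{c_i})$ for $i<n$ (since nothing changes below the cut-off), I define $\KK_{n+1}(Z^n_{c_n})$ on the cone $Z^n_{c_n}$ via the mapping cone of the splitting $\lambda$ provided by \cite[Theorem~9.10]{ABIS}, and I define $\KK_{n+1}(X)$ (and, for $i>n$, $\KK_{n+1}(Z^i_{c_i})$) as $(\iota_{n+1})_*(\iota_{n+1})^*\cone{\varphi_\lambda}[-1]$ with $\varphi_\lambda$ as built in the text.

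The three things I need to verify at the inductive step are: first, that $\cone{\varphi_\lambda}[-1]$ (and the similar objects on the slices) carry a K\"unneth structure and that the morphism $\varphi_\lambda$ fits into a K\"unneth morphism. This follows by combining Proposition~\ref{prop:basicpropertieskunneth}(I),(III),(IV),(V) (which controls the K\"unneth structure under the functors $(\iota_l)_*(\iota_l)^*$, $(j_l)_*(j_l)^*$, $(\theta_l)_*(\theta_l)^*$ and the associated adjunctions) with Lemma~\ref{lem:cones} (which provides the K\"unneth structure on the cone). Second, I need the splitting $\lambda$ to be a K\"unneth morphism, so that $\varphi_\lambda$ is as well; since by Lemma~\ref{lem:trivialidad} the relevant complex on the stratum is a direct sum of shifted constant sheaves with canonical K\"unneth structures from Example~\ref{ex:generalized}, the truncation and its canonical splitting are automatically K\"unneth. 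Third, I need to check $[AXKS1_{\pp}]_{n+1}$: property $(a_{n+1})$ is the [AXS1]-property for the restriction to $X\setminus X_{d-c_{n+1}}$, which is the content of \cite[Theorem~7.3]{ABIS} (applicable because $\theta_n$ is closed in $X\setminus X_{d-c_{n+1}}$ and direct images along closed inclusions commute with truncations); property $(b_{n+1})$ is inherited from $\KK_n$ via the isomorphism $\KK_n(X)\cong(\iota_n)_*(\iota_n)^*\KK_{n+1}(X)$; and property $(c_{n+1})$ holds tautologically since $(\iota_{n+1})^*(\iota_{n+1})_*=\id$.

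I expect the main technical obstacle to be step two above: namely, carefully checking that $\varphi_\lambda$ is a genuine K\"unneth morphism, which amounts to verifying the compatibility~(\ref{eq:compatisos}) of the trivializing isomorphisms after applying the adjunction $a$ and the truncation induced by $\lambda$. Here one needs the compatibility of the system of trivializations (so that the $\alpha_z$ in~(\ref{eq:alphaz}) are independent of $z$), together with the naturality of the truncation functor, to transport the K\"unneth structure inherited on $\KK_n(Z^n_{c_n})$ through the projection $\pi_1^n\circ\phi_n|$ and obtain matching K\"unneth data on $TX_{d-c_n}\setminus X_{d-c_{n+1}}$ and on each overlap region $A$ of~(\ref{eq:defA}). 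Once this compatibility is in place, the remaining checks are formal consequences of Proposition~\ref{prop:basicpropertieskunneth} and Lemma~\ref{lem:cones}, and setting $n=k+1$ finishes the proof.
\endproof
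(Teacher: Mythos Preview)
Your proposal is correct and follows essentially the same route as the paper: the inductive construction of the $\KK_n$ sketched just before the proposition \emph{is} the paper's proof, and setting $n=k+1$ yields the KIS-complex. Your additional care in checking that $\varphi_\lambda$ is a K\"unneth morphism (via Lemma~\ref{lem:trivialidad} and the canonical K\"unneth structures of Example~\ref{ex:generalized}) is a welcome elaboration of a point the paper leaves implicit.
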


The next proposition shows that there is a unique (up to K\"unneth isomorphism) K\"unneth intersection space complex on $X$. 
\begin{prop}[Uniqueness of KIS-complexes]\label{prop:KuennethIS_uniqueness}\ \\
Let $\left( \LL(X), (\LL(Z_{c_1}^1, \beta_1, \ldots, \LL(Z_{c_k}^k), \beta_k \right)$ be a K\"unneth complex on $X$ that satisfies the properties $[AXKS1_{\pp}]$. Then, there is a K\"unneth isomorphism $\LL(X) \cong \IS (X).$ In other words, a KIS-complex for perversity $\pp$ is unique up to K\"unneth isomorphism.
\end{prop}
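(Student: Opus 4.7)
The plan is to adapt the Goresky--MacPherson uniqueness argument for intersection cohomology complexes, tracking the K\"unneth data at each step. I proceed by induction on $n\in\{1,\ldots,k+1\}$, proving the stronger claim: any K\"unneth complex on $X$ (together with its companion complexes on the slices) satisfying the properties $[AXKS1_{\pp}]_n$ is K\"unneth-isomorphic to the complex $\KK_n$ constructed in the passage preceding the proposition. The case $n=k+1$ yields the statement, since $\IS(X)=\KK_{k+1}$ and $[AXKS1_{\pp}]=[AXKS1_{\pp}]_{k+1}$.

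For the base case $n=1$, axiom $(b_1)$ directly supplies the required K\"unneth isomorphisms $\LL(X)\cong\KK_1(X)=(\iota_1)_*\Q_{X\setminus X_{d-c_1}}$ and $\LL(Z_{c_i}^i)\cong\KK_1(Z_{c_i}^i)$, establishing the claim.

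For the inductive step $n\to n+1$, let $\LL$ satisfy $[AXKS1_{\pp}]_{n+1}$. Its restrictions to $X\setminus X_{d-c_n}$ and to each $Z^i_{c_i}\setminus Z^i_{c_i-c_n}$ inherit K\"unneth structures via the restricted trivializations and satisfy $[AXKS1_{\pp}]_n$ on these open subsets. The inductive hypothesis yields a K\"unneth isomorphism $(\iota_n)^*\LL\cong(\iota_n)^*\KK_{n+1}$, which after applying $(\iota_n)_*$ and invoking property $(c_n)$ for $\KK_n$ and the construction produces a K\"unneth isomorphism
\[
\Phi_n:(\iota_n)_*(\iota_n)^*\LL\xrightarrow{\cong}\KK_n.
\]
To extend $\Phi_n$ across the stratum at depth $n$, I invoke the uniqueness statement in \cite[Theorem~9.10]{ABIS}: the splitting $\lambda$ of the natural truncation map that defines $\KK_{n+1}(Z^n_{c_n})$ is the unique such splitting. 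The same assertion applied to $\LL(Z^n_{c_n})$ produces a splitting $\lambda_{\LL}$, and $\Phi_n$ carries $\lambda_{\LL}$ to $\lambda$ by the naturality of truncations together with uniqueness. The K\"unneth morphisms $\varphi_{\lambda_{\LL}}$ and $\varphi_{\lambda}$ assembled from them are therefore matched by $\Phi_n$, so Lemma~\ref{lem:cones} furnishes a K\"unneth isomorphism between the shifted mapping cones. Applying $(\iota_{n+1})_*(\iota_{n+1})^*$ and using property $(c_{n+1})$ for $\LL$ then delivers the desired K\"unneth isomorphism $\LL\cong\KK_{n+1}$, completing the induction.

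The main obstacle will be verifying that $\Phi_n$ genuinely intertwines the two splittings $\lambda_{\LL}$ and $\lambda$ on the punctured cone $\rZ^n_{c_n}$: this relies on the uniqueness assertion in \cite[Theorem~9.10]{ABIS} together with the K\"unneth naturality of adjunction morphisms and of the truncation functors recorded in Proposition~\ref{prop:basicpropertieskunneth}, plus the fact that the inductive hypothesis is being applied simultaneously on $X$ and on all slices $Z^i_{c_i}$ in a way compatible with the trivializing isomorphisms $\beta_i$. Once these coherence diagrams are checked, Lemma~\ref{lem:cones} and axiom $(c_{n+1})$ close the step cleanly.
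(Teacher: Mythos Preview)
Your inductive framework matches the paper's, but there is a genuine gap in the step $n\to n+1$. You construct the splitting $\lambda_{\LL}$ from the data of $(\iota_n)_*(\iota_n)^*\LL$, form the K\"unneth complex $(\iota_{n+1})_*(\iota_{n+1})^*C(\varphi_{\lambda_{\LL}})[-1]$, and show it is K\"unneth isomorphic to $\KK_{n+1}$. But you never verify that $\LL$ itself is K\"unneth isomorphic to $(\iota_{n+1})_*(\iota_{n+1})^*C(\varphi_{\lambda_{\LL}})[-1]$. Property $(c_{n+1})$ only gives $\LL\cong(\iota_{n+1})_*(\iota_{n+1})^*\LL$; it says nothing about $\LL$ arising as the shifted cone of your map $\varphi_{\lambda_{\LL}}$. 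In effect you have proved uniqueness among complexes \emph{built by the cone recipe}, not among all K\"unneth complexes satisfying the axioms.

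This missing identification is exactly the substantive part of the argument. The paper supplies it by analysing the adjunction triangle $\LL\to(\iota_n)_*(\iota_n)^*\LL\to cone(f)\xrightarrow{[1]}$ directly: using the K\"unneth structure and Lemma~\ref{lem:trivialidad} together with the $[AXS1]$ axioms, one shows that $(\theta_n)_*(\theta_n)^*cone(f)$ is a direct sum of shifted constant sheaves with stalks $H^r\bigl(\tau_{\leq\qq(c_n)}(j_v)^*\KK_n(Z_{c_n}^n)\bigr)$, and then checks stalkwise over $X_{d-c_n}\setminus X_{d-c_{n+1}}$ that the connecting map $(\iota_n)_*(\iota_n)^*\LL\to cone(f)$ is the projection onto the low truncation. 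Only after this identification can one place the adjunction triangle for $\LL$ and the defining triangle for $\KK_{n+1}$ into a commuting square and complete it to a K\"unneth isomorphism $\LL\cong\KK_{n+1}$ via the triangulated-category axioms. Your appeal to the uniqueness of $\lambda$ from \cite[Theorem~9.10]{ABIS} is the right ingredient for the commutativity check, but it cannot by itself produce the identification of $cone(f)$ that your argument presupposes.
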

\begin{proof}
The proof is based on an induction showing that if $\LL_n(X)$ is a K\"unneth complex on $X$ satisfying the $[AXKS1_{\pp}]_n$ properties, then there is a K\"unneth isomorphism $\LL_n(X) \cong \KK_n (X)$ to the K\"unneth complex constructed at the beginning of Section \ref{subs:KIScomplex}.

\textbf{n = 1.} Axioms $(b_1)$ and $(c_1)$ imply, that $\LL_1(X) \cong (\iota_1)_* \Q_{X \setminus X_{d - c_1}} \cong \KK_1 (X).$

$\mathbf{n \to n+1.}$ Let $\left( \LL_{n+1}(X), \left( \LL_{n+1}(Z_{c_1}^1), \beta_1, \ldots, \LL_{n+1}(Z_{c_k}^k), \beta_k \right) \right)$ be a K\"unneth complex on $X$ that satisfies the axioms $[AXKS1_{\pp}]_{n+1}$. We consider the distinguished triangle associated to the following adjunction $f$.
\begin{equation}
\begin{tikzcd}
\LL_{n+1} (X)  \ar{r}{f} &  (\iota_n)_* (\iota_n)^* \LL_{n+1} (X) \ar{r} & cone(f) \ar{r}{[1]} & \
\end{tikzcd}
\label{triang:cone_adj}
\end{equation}
Since $(\iota_n)^* (\iota_n)_* = \id$, the restriction of the mapping cone of $f$ to $U_n$ vanishes, $(\iota_{n})^* cone(f) = 0$. Since $\iota_n$ is an open inclusion, one has $(\iota_n)^* = (\iota_n)^!$ and hence the adjunction triangle
\[ (\iota_n)_! (\iota_n)^* cone(f) \to cone(f) \to (j_n)_* (j_n)^* cone (f) \rightarrow{[1]} \]
implies that $cone (f) \cong (j_n)^* (j_n)^* cone (f).$ 
We apply the functor $(\iota_{n+1})_* (\iota_{n+1})^*$ to the above triangle (\ref{triang:cone_adj}) and use the equality of functors 
\[(\iota_{n+1})_* (\iota_{n+1})^* (j_n)_* (j_n)^* = (\theta_n)_* (\theta_n)^* \]
to arrive at the following triangle of K\"unneth structures.
\[
\begin{tikzcd}
\LL_{n+1} (X) \ar{r}{f} & (\iota_n)_* (\iota_n)^* \LL_{n+1} (X) \ar{r} & (\theta_n)_*(\theta_n)^* cone(f) \ar{r}{[1]} & \
\end{tikzcd}
\]
Since the K\"unneth complex $(\iota_n)_* (\iota_n)^* \LL_{n+1} (X)$ satisfies the properties $[AXKS1_{\pp}]_n$, by the induction assumption, there is a K\"unneth isomorphism
\[
\begin{tikzcd}
(\iota_n)_* (\iota_n)^* \LL_{n+1} (X) \ar{r}[swap]{\cong} \ar[bend left = 15]{rr}{\phi} & \KK_{n} (X) \ar[equal]{r} & (\iota_n)_* (\iota_n)^*  \KK_{n+1} (X),
\end{tikzcd}
\]
where the last equality follows from the definition of $\KK_{n+1} (X).$
 
Since $f$ is a K\"unneth morphism, it holds by Lemma \ref{lem:cones} that
\[ cone(f)|_{TX_{d - c_n} \setminus X_{d - c_{n+1}}} \cong \phi_n^* (\pi_1^n)^* cone(f_n), \]
with $f_n: \LL_{n+1} (Z_{c_n^n}) \to (\iota_n^n)_* (\iota_n^n)^* \LL_{n+1} (Z_{c_n}^n).$ Applying the adjunction functor $(\theta_n)_* (\theta_n)^*$ and using once more that $\pi_1^n \circ \phi_n \circ \theta_n| = j_v \circ \pi_1^n \circ \phi_n|,$ we get the isomorphism
\[ (\theta_n)_*(\theta_n)^* cone(f) \cong (\theta_n)_* \phi_n^* (\pi_1^n)^* j_v^* \, cone (f_n). \]
By Lemma \ref{lem:trivialidad}, this is isomorphic to the direct sum
\[ \bigoplus_{r=0}^d (\theta_n)_* (\underline{G_r})_{X_{d - c_n} \setminus X_{d - c_{n+1}}} [-r], \]
where $ G_r$  is the cohomology group $ H^r ( \tau_{\leq \qq(c_n)} (j_v)^* \KK_{n} (Z_{c_n}^n)).$
This fact follows from Property $(a_{n+1}),$ i.e. that $\LL_{n+1} (X)|_{X \setminus X_{d - c_{n+1}}}$ satisfies the [AXS1] properties: indeed, 
there is a direct sum decomposition 
\begin{align*}
(j_v)^* (\iota_n^n)_* (\iota_n^n)^* \LL_{n+1} (Z_{c_n}^n) \cong ~ & \tau_{\leq \qq (c_n)} (j_v)^* (\iota_n^n)_* (\iota_n^n)^* \LL_{n+1} (Z_{c_n}^n) \\
	& \oplus  \tau_{> \qq(c_n)} (j_v)^* (\iota_n^n)_* (\iota_n^n)^* \LL_{n+1} (Z_{c_n}^n),
\end{align*}
and the [AXS1] properties imply that the morphism 
\[ (j_v)^* \LL_{n+1} (Z_{c_n}^n) \to (j_v)^* (\iota_n^n)_* (\iota_n^n)^* \LL_{n+1} (Z_{c_n}^n) \]
has image isomorphic to the summand $\tau_{> \qq(c_n)} (j_v)^* (\iota_n^n)_* (\iota_n^n)^* \LL_{n+1} (Z_{c_n}^n)$. The existence of a K\"unneth isomorphism $(\iota_n^n)_* (\iota_n^n)^* \LL_{n+1} (Z_{c_n}^n) \to \KK_n (Z_{c_n}^n)$ implies the desired result. (This K\"unneth isomorphism is induced from the K\"unneth isomorphism $(\iota_n)_* (\iota_n)^* \LL_{n+1} (X) \xrightarrow{\cong} \KK_n (X).$)

Since the same arguments can be applied to the cone on the adjunction $g: \KK_{n+1} (X) \to (\iota_n)_* (\iota_n)^* \KK_{n+1} (X)$, we get a K\"unneth isomorphism
\[ 
\begin{tikzcd}
(\theta_n)_* (\theta_n)^* cone(f) \ar{r}{\cong} \ar{dr}[swap]{\psi} &  \bigoplus_{r=0}^d (\theta_n)_* (\underline{G_r})_{X_{d - c_n} \setminus X_{d - c_{n+1}}} [-r] \ar{d}{\cong} \\
 \ &  (\theta_n)_* (\theta_n)^* cone(g).
\end{tikzcd}
\]
We claim that the K\"unneth isomorphisms $\phi$ and $\psi$ fit into the following commutative diagram.
\begin{equation}
\begin{tikzcd}
 (\iota_n)_*(\iota_n)^*\LL_{n+1}(X)           \ar{r} \ar{d}{\phi} 	&  (\theta_{n})_*(\theta_{n})^*cone(f)  \ar{d}{\psi} \\
 (\iota_n)_*(\iota_n)^*\KK_{n+1}(X)           \ar{r}			&  (\theta_{n})_*(\theta_{n})^*cone(g)
\end{tikzcd}
\label{diag:comm_unique}
\end{equation}
The commutativity is obvious on all stalks over points that are not in $X_{d - c_n} \setminus X_{d- c_{n+1}}$. 
Thus, it is enough to check commutativity over $X_{d-c_{n}}\setminus X_{d-c_{n+1}}$. For $ x \in X_{d-c_{n}}\setminus X_{d-c_{n+1}}$, the following diagram commutes.
\[
\begin{tikzcd}
\left( 	(\iota_n)_* (\iota_n)^* \LL_{n+1} (X) \right)_x \ar{r} \ar{d}{\cong} \ar[bend right = 85]{ddd}{\phi_x}		& \quad (cone (f))_x  \quad \ar{d}{\cong} \ar[bend left = 85]{ddd}[swap]{\psi_x}	\\
\left( (\iota_n^n)_* (\iota_n^n)^* \LL_{n+1}(Z_{c_n}^n) \right)_v \ar{r}{proj.} \ar{d}{\cong} 				& 	\left( \tau_{\leq \qq(c_n)}  (\iota_n^n)_* (\iota_n^n)^* \LL_{n+1} (Z_{c_n}^n) \right)_v \ar{d}{\cong} \\
\left( (\iota_n^n)_* (\iota_n^n)^* \KK_{n+1} (Z_{c_n}^n) \right)_v \ar{r}{proj.} \ar{d}{\cong} 	& \left( \tau_{\leq \qq (c_n)} (\iota_n^n)_* (\iota_n^n)^* \KK_{n+1} (Z_{c_n}^n) \right)_v \ar{d}{\cong} \\
\left( 	(\iota_n)_* (\iota_n)^* \KK_{n+1} (X) \right)_x \ar{r}  							& 	\quad (cone (g))_x \quad
\end{tikzcd}
\]
Thus, Diagram (\ref{diag:comm_unique}) commutes.
We have constructed the following commutative diagram of K\"unneth structures
\[ 
\begin{tikzcd}
\KK_{n+1}(X)\ar{r}{f} & (\iota_n)_*(\iota_n)^*\KK_{n+1}(X)\ar{d}{\cong} \ar{r} & (\theta_{n})_*(\theta_{n})^*cone(f)\ar{r}{[1]}\ar{d}{\cong} &\quad   \\
\LL_{n+1}(X)\ar{r}{f} & (\iota_n)_*(\iota_n)^*\LL_{n+1}(X)              \ar{r} & (\theta_{n})_*(\theta_{n})^*cone(g)\ar{r}{[1]} &\quad
\end{tikzcd}
\]
where the rows are triangles and the vertical arrows isomorphisms. There is a K\"unneth isomorphism from $\KK_{n+1}(X)$ to $\LL_{n+1}(X)$ completing the triangle up to homotopy. Indeed the proof of Axioms I-III of triangulated categories for the category of bounded complexes with morphisms up to homotopy (see Theorem 4.1.9 of~\cite{GelfandManin}) adapts word by word to the case of K\"unneth structures and provides the desired completion.
\end{proof}

\begin{rem}
\label{rem:spacetosheaf}
An easy but tedious inspection on the construction of Section~3.3, Sections~4 and 5 up to Definition~5.15 and Theorem~5.16 of~\cite{ABIS} show that the complex that Definition 5.15 of~\cite{ABIS} associates with the intersection space Pair $(I^{\pp}X,I^{\pp}X_{d-2})$ of Remark~\ref{rem:compatIS} is the trivialized K\"unneth intersection space Complex of perversity $\pp$ for the compatible system of trivializations $(\phi_{1},...,\phi_k)$. It is remarkable that this happens for any choice of $(I^{\pp}X,I^{\pp}X_{d-2})$ as long as it is compatible with the system of trivializations $(\phi_{1},...,\phi_k)$.
\end{rem}
\proof
We limit ourself to highlight the main points of the tedious inspection of~\cite{ABIS} leading to the proof, the reaser interested in the details should have~\cite{ABIS} at hand. 

The construction of $(I^{\pp}X,I^{\pp}X_{d-2})$ in Section~3.3 of~\cite{ABIS} adds condition (iv), which implies that it has a compatible system of trivializations.

The pairs of spaces $(I^{\pp,n}X,I^{\pp,n}X_{d-2})$ defined in Section~4 are constructed in the same way as $(I^{\pp}X,I^{\pp}X_{d-2})$ but modifying smaller neighbourhoods of the strata. So, they also have a compatible system of trivializations.

For every $n\in \mathbb N$, $I^{\pp,n}X$ is included in a space $X'$ such that it has a compatible system of trivializations, it is topologically equivalent to $X$ and the retract $\pi:X' \to X$ preserves the system of trivializations (see  Definition~3.28 of~\cite{ABIS})

The complexes of sheaves $\mathcal{K}^{n, \bullet}$ defined in Section~5 are the kernel of the morphism
$$\nu^{n \#}: j^{n}_* \mathcal{C}_X^{n, \, \bullet} \rightarrow \mu^{n}_* \mathcal{C}_{X_{d-2}}^{n, \, \bullet}$$
where $\mathcal{C}_X^{n, \, \bullet}$ is the complex of of cubical singular cochains of $I^{\pp,n}X$, $\mathcal{C}_{X_{d-2}}^{n, \, \bullet}$ is the complex of of cubical singular cochains of $I^{\pp,n}X_{d-2}$ and
\begin{align*}
j^{n}: I^{\pp,n}X & ~\to X' \\
\mu^{n}: I^{\pp,n}X_{d-2} & ~\to X'
\end{align*}
are the canonical inclusions in $X'$.

It is easy to prove that these complexes of sheaves are K\"unneth complexes and the morphisms are K\"unneth morphisms.

Moreover, applying that the inverse limit commutes with the inverse image of sheaves, it is easy to prove that the inverse limit $\varprojlim_{n \in \mathbb N} \mathcal{K}^{n, \,\bullet}$ is also a K\"unneth complex.

Finally, since $\pi$ preserves the system of trivializations, we can check that $IS =\pi_* \varprojlim_{n \in \mathcal N} \mathcal{K}^{n, \,\bullet}$ is a K\"unneth complex.
\endproof
\begin{rem}\label{rem:deRhamIScomplex}
Another way to get the trivialized K\"unneth intersection space Complex of perversity $\pp$ for the compatible system of trivializations $(\phi_1, \ldots, \phi_k)$ on a smoothly stratified space is to use differential forms on the regular part $U = X \setminus X_{d-c_1}$. The complex of forms $\OI(X)$ was established in \cite{Ess} for a different class of pseudomanifolds of stratification depth two. The construction can be applied to our case as follows: a differential form $\omega$ on $U$ is contained in $ \OI (X)$ if 
\[ \omega|_{U \cap T_{d - c_i}} = \phi_i^{*} \sum_k \pi_1^* \eta_k \wedge \pi_2^* \gamma_k, \]
where the $\eta_k \in \Omega^\bullet (X_{d-c_i} \setminus X_{d - c_{i+1}})$ are forms on the singular stratum and the 
$\gamma_k \in \tau_{\geq m_i -1 -  \pp(m_i)} \OI (Z^i_{c_i} \setminus X_{d_{c_i}})$ are cotruncated $\OI$-forms on the link, with $m_i = \dim (Z^i_{c_i})$. Note that one has to clarify, why and how the (fiberwise) cotruncation used here can be iterated, but we do not elaborate on this. Following Banagl, see \cite[Section 6]{BandR}, the complex $\OI (X)$ is then used to define a pre-sheaf 
\[ V \mapsto \OI (V \cap U) = \left\{ \omega \in \Omega^\bullet (V \cap U) | \exists \widetilde{\omega} \in \OI (X): \widetilde{\omega}|_{V \cap U} = \omega \right\}.\] 
The sheafification $\mathbf{\OI}$ of this presheaf then is the desired de Rham description of the trivialized K\"unneth intersection space Complex of perversity $\pp$.
It might be interesting to compare our proof of Poincar\'e duality to the Poincar\'e duality of the de Rham picture, which is induced by integrating wedge products of forms over $U$, see \cite[Theorem 8.2]{BandR} and \cite[Theorem 7.4.1]{Ess}.
\end{rem}
It will be convenient to have the ``dual'' of the previous Proposition:
\begin{prop}\label{prop:KuennethISdual}
Let $X_d\supset X_{d-c_1}\supset ....\supset X_{d-c_k}$ be a stratified pseudomanifold with a compatible system of tubular neighborhoods and a compatible system of trivializations $(\phi_{1},...,\phi_k)$. There is a (up to K\"unneth isomorphism) unique K\"unneth complex $('\IS (X),('\IS (Z^1_{c_1}),\beta'_1,...,'\IS (Z^k_{c_k}),\beta'_k))$ satisfying the following properties $[AXKS1'_{\pp}]$. 
\begin{itemize}
 \item The complexes $'\IS (X)$ and $'\IS (Z^i_{c_i}), ~ i \in \left\{ 1, \ldots, k \right\}$ satisfy the properties $[AXS1]$ for perversity $\pp$ of \cite[Section 6]{ABIS}.
 \item there are K\"unneth isomorphisms 
	\begin{align*}
	(\iota_1)_!(\iota_1)^! ('\IS (X)) & \cong (\iota_1)_!\Q_{X\setminus X_{d-c_1}}, \\
	(\iota_1^i)_!(\iota_1^i)^! ('\IS (Z_{c_i}^i)) & \cong (\iota_1^i)_!\Q_{Z_{c_i}^i\setminus Z_{c_i-c_1}^i}, ~ i \in \left\{ 1, \ldots, k \right\}.
	\end{align*}
\end{itemize}
\end{prop}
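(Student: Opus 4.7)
The plan is to prove existence and uniqueness of ${}'\IS(X)$ by mimicking, step for step, the proofs of Propositions~\ref{prop:KuennethIS_existence} and~\ref{prop:KuennethIS_uniqueness}, with the functors $(\iota_l)_*(\iota_l)^*$ systematically replaced by $(\iota_l)_!(\iota_l)^!$. Since the axioms $[AXS1]$ themselves are unchanged and only the boundary condition at the regular stratum differs, Proposition~\ref{prop:basicpropertieskunneth}(II) and Lemma~\ref{lem:cones} supply precisely the formal K\"unneth-category machinery needed to carry out this dualized version.

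For existence, I would build a sequence $({}'\KK_n)_{n \in \{1,\ldots,k+1\}}$ of K\"unneth complexes inductively, starting from ${}'\KK_1(X) := (\iota_1)_! \Q_{X\setminus X_{d-c_1}}$ together with the normal-slice data ${}'\KK_1(Z_{c_i}^i) := (\iota_1^i)_! \Q_{Z_{c_i}^i\setminus Z_{c_i-c_1}^i}$, whose K\"unneth structure is built as in Example~\ref{ex:Kunnethstr} but using extension by zero. This trivially fulfills the $n=1$ instance of $[AXKS1'_{\pp}]$. In the inductive step $n\to n+1$, I would repeat the Deligne-style cone-over-truncation procedure that produced $\KK_{n+1}(X)$, but with the concluding $(\iota_{n+1})_*(\iota_{n+1})^*$ replaced by $(\iota_{n+1})_!(\iota_{n+1})^!$. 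The arguments of~\cite[Theorem~7.3]{ABIS}, combined with Proposition~\ref{prop:basicpropertieskunneth}(II) and Lemma~\ref{lem:cones}, then show that the resulting object satisfies $[AXKS1'_{\pp}]_{n+1}$.

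For uniqueness, I would run the inductive scheme of Proposition~\ref{prop:KuennethIS_uniqueness}. Given a K\"unneth complex ${}'\LL_{n+1}$ satisfying $[AXKS1'_{\pp}]_{n+1}$, the open--closed recollement produces a distinguished triangle
\[ (\iota_n)_!(\iota_n)^* {}'\LL_{n+1}(X) \to {}'\LL_{n+1}(X) \to (\theta_n)_*(\theta_n)^* cone \xrightarrow{[1]} \]
on $U_{n+1}$, which I would compare with its analogue for ${}'\KK_{n+1}(X)$: the induction hypothesis supplies a K\"unneth isomorphism of the left terms, Lemma~\ref{lem:trivialidad} yields matching direct-sum decompositions of the cones into shifted constant sheaves, and completing the morphism of triangles via the K\"unneth-adapted triangulated-category axioms produces the sought K\"unneth isomorphism ${}'\LL_{n+1}(X) \cong {}'\KK_{n+1}(X)$.

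The main obstacle, just as in the original propositions, is the coherent bookkeeping of K\"unneth data across every step of the induction, especially verifying that the $!$-adjunction morphisms respect the trivializations $\beta_i$ inherited along the link bundles. The compatibility relations of Section~\ref{sec:trivi} should guarantee that this propagation from the normal slices $Z_{c_i}^i$ to $X$ behaves identically in the $!$-case as in the $*$-case, so I do not expect genuinely new technical difficulties beyond those handled by the original arguments.
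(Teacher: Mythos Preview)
Your approach is correct in spirit but takes a genuinely different route from the paper. You propose to redo the entire inductive Deligne-style construction with $(\iota_l)_!(\iota_l)^!$ in place of $(\iota_l)_*(\iota_l)^*$, thereby obtaining the $!$-normalization on the regular stratum directly and then verifying $[AXS1]$ separately at each stage. The paper instead \emph{defines} ${}'\IS(X):=\VD\,\ISq(X)[-d]$ and ${}'\IS(Z^i_{c_i}):=\VD\,\ISq(Z^i_{c_i})[-c_i]$, invokes \cite[Theorem~10.1]{ABIS} to get $[AXS1]$ for perversity $\pp$, and checks that Verdier duality transports the K\"unneth structure via $\beta'_i:=(\VD\beta_i[-d])^{-1}$ (using $\phi_i^!=\phi_i^*$ and $(\pi_1^i)^!=(\pi_1^i)^*[d-c_i]$). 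Uniqueness then follows by dualizing and appealing to Proposition~\ref{prop:KuennethIS_uniqueness}. The paper's argument is substantially shorter and has the advantage that the subsequent duality corollary ${}'\ISq(X)\cong\VD\,\IS(X)[-d]$ is immediate from the definition, whereas in your approach it would require an additional comparison. Your approach, on the other hand, is more self-contained and does not rely on the duality result from \cite{ABIS}; but you should be aware that the intermediate axioms $[AXKS1'_{\pp}]_n$ you implicitly need (with property $(c_n)$ replaced by the $!$-adjunction being an isomorphism) are not formulated in the paper, and the verification that your $!$-extended cones still satisfy the $*$-type conditions $(c_l)$ and $(d_l)$ of $[AXS1]$ is a genuine, if routine, check that your sketch glosses over.
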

\proof
Let $\bar q$ be the complementary perversity to $\bar p$. Define 
$$'\IS(X):=\VD\ISq(X)[-d],\quad\quad'\IS(Z^i_{c_i}):=\VD\ISq(X)(Z^i_{c_i})[-c_i]$$
for any $i\in\{1,...,k\}$. By Theorem~10.1 of~\cite{ABIS} the complexes defined above satisfy the axioms $[AXS1]$ for perversity $\bar p$. 
We claim that $'\IS(X)$ has a K\"unneth structure as predicted in the proposition for $\beta'_i:=(\VD(\beta_i)[-d])^{-1}$. Indeed, since $\ISq(X)$ has a K\"unneth structure $(\ISq (Z^1_{c_1}),\beta_1,...,\ISq (Z^k_{c_k}),\beta_k)$, for any $i$ we have an isomorphism 
$$\beta_i: \ISq(X)|_{TX_{d-c_i}\setminus X_{d-c_{i+1}}}\xrightarrow{\cong} \phi_i^* (\pi^i_1)^* \ISq(Z^i_{c_i}).$$
Applying the functor $\VD(\centerdot)[-d]$ we obtain an isomorphism 
$$(\VD(\beta_i)[-d])^{-1}: ~ '\IS(X)|_{TX_{d-c_i}\setminus X_{d-c_{i+1}}}\xrightarrow{\cong} \phi_i^! (\pi^i_1)^!\VD(\ISq(Z^i_{c_i}))[-d].$$
Since $\phi_i$ is an isomorphism we have $(\phi_i)^!=(\phi_i)^*$. Moreover, since $\pi^k_1: X_{d-c_k} \times Z^k_{c_k} \to Z$ is a projection with fiber the smooth $(d-c_k)$-dimensional manifold $X_{d-c_k}$ we have $(\pi^k_1)^! = (\pi^k_1)^* [d-c_k]$ (see e.g. \cite[p. 193]{Ban07}). 
Then 
$$(\pi^i_1)^!\VD(\ISq(Z^i_{c_i}))[-d]=(\pi^i_1)^*\VD(\ISq(Z^i_{c_i})[-c_k]='\IS(Z^i_{c_i}).$$
This proves the claim.

By Proposition~\ref{prop:KuennethIS_uniqueness}, there is an isomorphism of K\"unneth structures 
$$(\iota_1)_*\Q_{X\setminus X_{d-c_1}}\to (\iota_1)_*(\iota_1)^* (\IS (X)).$$
Applying $\VD(\centerdot)[-d]$, we get an isomorphism in the derived category
$$(\iota_1)_!\Q_{X\setminus X_{d-c_1}}\to (\iota_1)_!(\iota_1)^! ('\IS (X)).$$
It is a routine check as above that it lifts to an isomorphism of K\"unneth structures. This proves existence.

Uniqueness follows dualizing, twisting with $[-d]$, and applying uniqueness of the K\"unneth intersection space complex. 
\endproof

A corollary of the above proof is the following duality

\begin{cor}
There is a K\"unneth isomorphism 
$$'\ISq (X)\cong \VD \IS (X)[-d].$$
\end{cor}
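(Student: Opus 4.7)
The plan is to observe that this corollary is essentially built into the construction used to prove Proposition~\ref{prop:KuennethISdual}. In that proof, given a K\"unneth intersection space complex $\ISq(X)$ of perversity $\qq$, the complex $'\IS(X) := \VD \ISq(X)[-d]$ was shown to carry a canonical K\"unneth structure (with trivializations $\beta'_i = (\VD(\beta_i)[-d])^{-1}$) satisfying the properties $[AXKS1'_{\pp}]$. We simply run the same construction with the roles of the perversities $\pp$ and $\qq$ exchanged: since $\pp$ is the complementary perversity of $\qq$, the same argument produces a K\"unneth complex
\[
\VD\IS(X)[-d],
\]
together with a K\"unneth structure induced by dualizing and shifting the K\"unneth structure of $\IS(X)$, which satisfies the properties $[AXKS1'_{\qq}]$.

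Next, by the uniqueness assertion of Proposition~\ref{prop:KuennethISdual} applied to the perversity $\qq$, any K\"unneth complex satisfying $[AXKS1'_{\qq}]$ is unique up to K\"unneth isomorphism. Since $'\ISq(X)$ satisfies these properties by definition, there is a K\"unneth isomorphism
\[
'\ISq(X) \cong \VD \IS(X)[-d],
\]
which is the claim. The verifications that $\VD\IS(X)[-d]$ satisfies axioms $[AXS1]$ for perversity $\qq$ and that the adjunction isomorphism on the regular part lifts to a K\"unneth isomorphism are identical to the ones carried out in the proof of Proposition~\ref{prop:KuennethISdual}, using Theorem~10.1 of~\cite{ABIS} together with the identities $\phi_i^! = \phi_i^*$ and $(\pi^i_1)^! = (\pi^i_1)^*[d-c_i]$. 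No new obstacle arises; the only subtle point, as before, is keeping track of the dimension shifts when dualizing the K\"unneth trivializations $\beta_i$ for $\IS(X)$, which is handled exactly as in the proof of Proposition~\ref{prop:KuennethISdual}.
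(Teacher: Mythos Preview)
Your proposal is correct and matches the paper's approach exactly: the paper states this corollary as an immediate consequence of the proof of Proposition~\ref{prop:KuennethISdual}, and what you have written is precisely the unpacking of that remark---swap the roles of $\pp$ and $\qq$ in the construction $'\IS(X):=\VD\ISq(X)[-d]$ and invoke the uniqueness clause.
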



\begin{prop}
There is a K\"unneth isomorphism
$$\IS(X) ~\cong ~'\IS(X).$$
\end{prop}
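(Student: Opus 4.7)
The plan is to verify that the K\"unneth complex ${'\IS}(X)$, already constructed in Proposition~\ref{prop:KuennethISdual}, satisfies the axioms $[AXKS1_{\pp}]$ characterizing $\IS(X)$, and then invoke the uniqueness statement of Proposition~\ref{prop:KuennethIS_uniqueness}. By Proposition~\ref{prop:KuennethISdual}, the underlying complexes of ${'\IS}(X)$ and of each slice ${'\IS}(Z^i_{c_i})$ already satisfy $[AXS1]$ for perversity $\pp$, so the first bullet of $[AXKS1_{\pp}]$ holds automatically. The only thing missing is the second bullet, namely the K\"unneth isomorphism
\[ (\iota_1)_*(\iota_1)^*\, {'\IS}(X) \;\cong\; (\iota_1)_* \Q_{X \setminus X_{d-c_1}}, \]
together with its analogues on each $Z^i_{c_i}$.

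At the level of $\Dbc$ this is a direct consequence of axiom $(a)$ of $[AXS1]$: the restriction $(\iota_1)^* {'\IS}(X)$ is quasi-isomorphic to $\Q_{X\setminus X_{d-c_1}}$, and applying $(\iota_1)_*$ yields the required isomorphism in $\Dbc(X)$. Both sides carry natural K\"unneth structures --- the left-hand side by Proposition~\ref{prop:basicpropertieskunneth}(I), the right-hand side by Example~\ref{ex:Kunnethstr}. I would upgrade the $\Dbc$-isomorphism to a K\"unneth isomorphism by induction on $\dim X$. The base case, a smooth pseudomanifold, is tautological since the K\"unneth structure is empty. For the inductive step, the same axiom $(a)$ applied on each slice $Z^i_{c_i}$ --- which is strictly lower dimensional except in the degenerate situation of Remark~\ref{rem:new}, where the K\"unneth condition is trivial --- produces a K\"unneth isomorphism
\[ (\iota_1^i)_*(\iota_1^i)^*\, {'\IS}(Z^i_{c_i}) \;\cong\; (\iota_1^i)_* \Q_{Z^i_{c_i} \setminus Z^i_{c_i - c_1}}. \]
Assembling these link-level isomorphisms via the trivializing isomorphisms $\beta'_i$ produces the required K\"unneth isomorphism on $X$, because the K\"unneth data on $(\iota_1)_*(\iota_1)^* {'\IS}(X)$ is obtained by pulling the link-level data through the $\beta'_i$, and the K\"unneth data on $(\iota_1)_* \Q_{X \setminus X_{d-c_1}}$ is obtained by pulling the canonical constant-sheaf trivializations through the \emph{same} homeomorphisms $\phi_i$.

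Once $[AXKS1_{\pp}]$ has been verified for ${'\IS}(X)$, Proposition~\ref{prop:KuennethIS_uniqueness} delivers a K\"unneth isomorphism $\IS(X) \cong {'\IS}(X)$. The main technical point is tracking the compatibility relation~(\ref{eq:compatisos}) across the two K\"unneth structures: one must check that the lifted isomorphism respects the second-level coherence data $\beta^{i_2}_{i_1}$ coming from nested strata. This reduces, however, to the inductive hypothesis applied on each pair $(Z^{i_2}_{c_{i_2}-c_{i_1}}, Z^{i_1}_{c_{i_1}})$, so the check is diagrammatic rather than computational, and follows from the identical functorial construction of both K\"unneth structures via Proposition~\ref{prop:basicpropertieskunneth}(I) and Example~\ref{ex:Kunnethstr}.
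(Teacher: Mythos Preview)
Your strategy is the mirror image of the paper's: you verify that ${'\IS}(X)$ satisfies $[AXKS1_{\pp}]$ and then invoke the uniqueness of $\IS(X)$ (Proposition~\ref{prop:KuennethIS_uniqueness}), whereas the paper verifies that $\IS(X)$ satisfies $[AXKS1'_{\pp}]$ and invokes the uniqueness of ${'\IS}(X)$ (Proposition~\ref{prop:KuennethISdual}). Both routes are legitimate and symmetric.

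Where the two part ways is in how the missing K\"unneth isomorphism is produced. The paper does it in one line: it applies the functor $(\iota_1)_!(\iota_1)^!$, which preserves K\"unneth structures by Proposition~\ref{prop:basicpropertieskunneth}(II), to the K\"unneth isomorphism $(\iota_1)_*\Q_U \cong (\iota_1)_*(\iota_1)^* \IS(X)$ already supplied by $[AXKS1_{\pp}]$, and then simplifies using $(\iota_1)^! = (\iota_1)^*$ and $(\iota_1)^*(\iota_1)_* = \id$. No induction, no upgrading. Your argument instead rebuilds the K\"unneth isomorphism from scratch by induction on $\dim X$, which is heavier and leaves one point under-specified: axiom~$(a)$ of $[AXS1]$ only asserts the existence of \emph{some} quasi-isomorphism to $\Q_U$, so the $\Dbc$-isomorphism you pick on $X$ is not a priori the one that restricts on each tube to the pullback of the $\psi_i$ you obtained inductively on the slices. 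Your last paragraph gestures at this compatibility, but the actual fix is to \emph{construct} $\Psi$ from the $\psi_i$ rather than to choose it first and hope it matches. The cleanest way to do that is precisely the mirror of the paper's trick: apply $(\iota_1)_*(\iota_1)^*$ (K\"unneth-preserving by Proposition~\ref{prop:basicpropertieskunneth}(I)) to the K\"unneth isomorphism $(\iota_1)_!\Q_U \cong (\iota_1)_!(\iota_1)^!\,{'\IS}(X)$ that Proposition~\ref{prop:KuennethISdual} already hands you, and simplify via $(\iota_1)^*(\iota_1)_! = \id$. That replaces your entire inductive step with a single functor application.
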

\proof
Applying the functor $(\iota_1)_!(\iota_1)^!$ to the isomorphism of K\"unneth structures 
$(\iota_1)_*\Q_{X\setminus X_{d-c_1}}\to (\iota_1)_*(\iota_1)^* (\IS (X))$
and using that $(\iota_1)^*=(\iota_1)^!$ and that $(\iota_1)^*(\iota_1)_*$ is equal to the identity we obtain the isomorphism of K\"unneth structures 
$$(\iota_1)_!\Q_{X\setminus X_{d-c_1}}\to (\iota_1)_!(\iota_1)^! (\IS (X)).$$
Taking the inverse we show that $\IS (X)$ satisfies the conditions of Proposition~\ref{prop:KuennethISdual}.
\endproof

Combining the results above our main theorem follows immediately.

\begin{thm}
\label{th:main1}
Let $X_d\supset X_{d-c_1}\supset ....\supset X_{d-c_k}$ be a stratified pseudomanifold with a compatible system of tubular neighborhoods and a compatible system of trivializations $(\phi_{1},...,\phi_k)$. Let $\pp, \qq$ be complementary perversities and let $\IS (X)$ be the K\"unneth intersection space complex of perversity $\pp$ for the given system of trivializations. Then, $\VD \IS [-d],$ where $\VD$ denotes the Verdier dual, is the K\"unneth intersection space complex of perversity $\qq$. 

In particular, if $X_d\supset X_{d-c_1}\supset ....\supset X_{d-c_k}$ is a Witt space, for the middle perversity we obtain a unique self-dual K\"unneth intersection space complex.
\end{thm}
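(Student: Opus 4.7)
The plan is to assemble the theorem essentially as a formal consequence of the three results immediately preceding it: the Corollary identifying $'\ISq(X)$ with $\VD\IS(X)[-d]$, the Proposition establishing $\IS(X)\cong {}'\IS(X)$, and the uniqueness statement from Proposition~\ref{prop:KuennethIS_uniqueness}. No new obstruction-theoretic work is required; the point is only to chase the K\"unneth isomorphisms in the correct order and then specialize to Witt spaces.

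For the first assertion, I would begin by applying the Proposition $\IS(X)\cong {}'\IS(X)$ not to the perversity $\pp$ but to its complement, obtaining a K\"unneth isomorphism $\ISq(X)\cong{}'\ISq(X)$. Composing with the Corollary $'\ISq(X)\cong \VD\IS(X)[-d]$ then yields a K\"unneth isomorphism $\ISq(X)\cong \VD\IS(X)[-d]$. Since the left-hand side is by definition the K\"unneth intersection space complex of perversity $\qq$, and this complex is unique up to K\"unneth isomorphism by Proposition~\ref{prop:KuennethIS_uniqueness}, this establishes that $\VD\IS(X)[-d]$ is the K\"unneth intersection space complex of perversity $\qq$. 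The only subtlety worth double-checking is that each intermediate isomorphism really lives in the category of K\"unneth complexes (i.e. is compatible with the trivializing data $\beta_i$); this is already verified in the cited statements, so the composition is automatically a K\"unneth isomorphism.

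For the Witt space assertion, I would specialize to the middle perversity $\um$. On a Witt space the lower and upper middle perversities coincide, so $\um$ is its own complementary perversity in the sense relevant to the first assertion. Substituting $\pp = \qq = \um$ into the previous paragraph immediately gives the K\"unneth isomorphism
\[ \VD\IS_{\um}(X)[-d] \cong \IS_{\um}(X), \]
which is the required self-duality. Uniqueness of this self-dual complex is then just an application of Proposition~\ref{prop:KuennethIS_uniqueness} for the middle perversity.

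I do not expect any serious obstacle: the hard work has already been done in the existence proof of Section~\ref{subs:KIScomplex}, in the uniqueness proof (Proposition~\ref{prop:KuennethIS_uniqueness}), and in the dual characterization of Proposition~\ref{prop:KuennethISdual} together with its corollary. If anything, the one place that deserves a short sentence of justification is the Witt-space reduction, to remind the reader that Witt-ness is precisely the condition ensuring the coincidence of the two middle-perversity KIS-complexes, so that the general duality statement collapses into genuine self-duality for a single complex.
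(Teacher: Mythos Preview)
Your proposal is correct and matches the paper's approach: the paper states that the theorem ``follows immediately'' from the preceding Corollary, Proposition, and uniqueness, which is exactly the chain of K\"unneth isomorphisms you assemble. One small imprecision in the Witt-space paragraph: the complementary perversity to the lower middle perversity $\um$ is the upper middle perversity $\bar{n}$, not $\um$ itself; the Witt condition is what forces $\IS_{\um}(X)\cong\IS_{\bar{n}}(X)$, and then your duality chain gives $\VD\IS_{\um}(X)[-d]\cong\IS_{\bar{n}}(X)\cong\IS_{\um}(X)$.
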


The proof of the Main Theorem is an immediate consequence of the results above and Remarks~\ref{rem:compatIS} and~\ref{rem:spacetosheaf}.  

\begin{rem}
\label{rem:mhm}
If $X_d\supset X_{d-c_1}\supset ....\supset X_{d-c_k}$ is a filtration by algebraic varieties forming a stratified pseudomanifold with a compatible system of tubular neighborhoods and a compatible system of trivializations $(\phi_{1},...,\phi_k)$, then all the constructions of the paper can be performed in the derived category of mixed Hodge modules. So the K\"unneth intersection space complex for a fixed perversity is in the derived category of mixed Hodge modules, and is unique up to isomorphism in this category.
\end{rem}

\end{document}